\newcommand{\Bl}{\operatorname{Bl}}
\newcommand{\sF}{\mathcal{F}}
\newcommand{\sG}{\mathcal{G}}
\newcommand{\sM}{\mathcal{M}}
\newcommand{\sN}{\mathcal{N}}
\newcommand{\sC}{\mathcal{C}}
\newcommand{\Sing}{\operatorname{Sing}}
\newtheorem{thm}{Theorem}[section]
\newtheorem*{thm*}{Theorem}
\newtheorem{prop}[thm]{Proposition}
\newtheorem*{prop*}{Proposition}
\newtheorem{conj}[thm]{Conjecture}
\newtheorem{lem}[thm]{Lemma}
\newtheorem{cor}[thm]{Corollary}
\newtheorem{defi}[thm]{Definition}
\newtheorem{rem}[thm]{Remark}
\newcommand{\Q}{\mathbf{Q}}
\newcommand{\Jac}{{\operatorname{Jac}}}
\newcommand{\Pic}{\operatorname{Pic}}
\newcommand{\Sym}{\operatorname{Sym}}
\newcommand{\un}{\mathbf{1}}
\renewcommand{\P}{\mathbb{P}}
\newcommand{\Z}{\mathbb{Z}}
\newcommand{\M}{\mathcal{M}}
\newcommand{\C}{\mathcal{C}}
\begin{document}

\title{K3 surfaces and cubic fourfolds with Abelian motive}

\author[H.Awada]{Hanine Awada}
\address{Institut Montpellierain Alexander Grothendieck \\ %
CNRS \\ %
Universit\'e de Montpellier \\ %
Case Courrier 051 - Place Eug\`ene Bataillon \\%
34095 Montpellier Cedex 5 f\\ %
France}
\email{hanine.awada@umontpellier.fr}

\author[M. Bolognesi]{Michele Bolognesi}
\address{Institut Montpellierain Alexander Grothendieck \\ %
CNRS \\ %
Universit\'e de Montpellier \\ %
Case Courrier 051 - Place Eug\`ene Bataillon \\ %
34095 Montpellier Cedex 5 \\ %
France}
\email{michele.bolognesi@umontpellier.fr}

\author[Robert Laterveer]
{Robert Laterveer}
\address{Institut de Recherche Math\'ematique Avanc\'ee,
CNRS -- Universit\'e 
de Strasbourg,\
7 Rue Ren\'e Des\-car\-tes, 67084 Strasbourg CEDEX,
FRANCE.}
\email{robert.laterveer@math.unistra.fr}

\author[C. Pedrini]{Claudio Pedrini}
\address{Dipartimento di Matematica \\ %
Universit\`a degli Studi di Genova \\ %
Via Dodecaneso 35 \\ %
16146 Genova \\ %
Italy}
\email{pedrini@dima.unige.it}

\begin{abstract}
%Let $\C_d$ denote Hassett's Noether-Lefschetz divisor in the moduli space of cubic fourfolds given by cubics with intersection form of discriminant $d$.  
%By imposing certain arithmetic conditions on the indexes $d_i$, we construct families of rational cubic fourfolds with an associated K3 surface with any prescribed N\'eron-Severi rank, contained in the intersection $\bigcap \C_{d_i}$ of a finite number of Hassett divisors.
%Then we apply this result to show that there exist one dimensional families of cubic fourfolds with finite dimensional Chow motive of Abelian type inside every divisor of special cubic fourfolds. 
We show that cubic fourfolds with lattice of algebraic 2-cycles of rank greater than 19 have abelian and finite dimensional (in the sense of Kimura) Chow motive.
This also implies Abelianity and finite dimensionality of the motive of related hyperK\"ahler varieties, such as the Fano variety of lines and the LLSvS 8fold. A similar remark allows us to show the Abelianity of the motive of an infinity of LSV 10folds, and of other hyperKähler 10folds associated to the twisted intermediate Jacobian fibration of cubic fourfolds with an associated K3 surface.
After that, starting from certain 4-dimensional families of K3 surfaces, we construct two families of Fano varieties whose Chow motive is finite dimensional. Varieties from the first family are some quadric surface fibrations, and contain the finite dimensional transcendental motive of a K3 surface. Varieties from the second family are singular cubic fourfolds, and their motives are Schur-finite and Abelian in Voevodsky's triangulated category of motives.

\end{abstract}

\maketitle

\tableofcontents

{}\section{Introduction} \label{sec:introduction}

Let $\M_{rat}(\mathbf{C})$ be the category of Chow motives with $\mathbf{Q}$-coefficients and let $\M^{Ab}_{rat}(\mathbf{C})$ be the strictly full, thick, rigid, tensor subcategory of $\M_{rat}(\mathbf{C})$ generated by the motives of Abelian varieties.  All the examples of motives that have been proven to be finite-dimensional, in the sense of Kimura-O'Sullivan, belong to the category $\M^{Ab}_{rat}(\mathbf{C})$. More precisely, the following classes of smooth projective varieties are known to have motives belonging to $\M^{Ab}_{rat}(\mathbf{C})$:

\begin{enumerate}

\item projective spaces, Grassmannian varieties, projective homogeneous
varieties, toric varieties;
\item smooth projective curves;
\item Kummer K3 surfaces;
\item K3 surfaces with Picard numbers at least 19;
\item K3 surfaces  with a non-symplectic group of automorphisms  acting trivially on the algebraic cycles: K3 surfaces satisfying these conditions have 
Picard numbers equal to $ 2, 4, 6, 10, 12, 16,$ $ 18, 20$, see \cite{Ped};
\item Hilbert schemes of points on  Abelian surfaces;
\item Fermat hypersurfaces ;
\item Cubic 3-folds and their Fano surfaces of lines, see \cite{GG} and  \cite{Diaz}.
\end{enumerate}

In this paper, we consider the case of cubic fourfolds, in relation with certain families of K3 surfaces.

\smallskip

According to  Kimura's Conjecture on finite dimensional motives and by the results in \cite{An} the Chow motive $h(S)$  of a complex K3 surface  $S$ should be of Abelian type. Also, by the work of Kuga and Satake, if one assumes the Hodge conjecture, every K3 surface over $\mathbf{C}$ is of Abelian Hodge type, i.e. there exists an algebraic correspondence between any K3 surface $S$ and an associated Abelian variety,  the {\it Kuga-Satake variety} $K(S)$.  This would imply that the motive of any such K3 surface is Abelian, i.e. it lies in the subcategory $\sM^{Ab}_{rat}(\mathbf{C})$ of the (covariant) category $\sM_{rat}(\mathbf{C})$ of Chow motives generated by the motives of curves. On the other hand, cubic fourfolds have been a very active field of research in the last few years, for several reasons. The rationality of the generic cubic is a very classical, and still unanswered, question in algebraic geometry. A lot of energy has been spent in order to find a good invariant that would detect the required birational properties, and tentatives have been made via Hodge theory, derived categories, Chow motives, algebraic cycles, etc. In most of these papers K3 surfaces appear as an important presence (in the cohomology, derived category, related hyperK\"ahler varieties, ...) whenever the cubic fourfolds are rational, or suspected to be rational.

\smallskip
 
Hassett \cite{Ha1} introduced the notion of "\textit{special}" cubic fourfold, that is a cubic that contains an algebraic surface not homologous to a complete intersection. These fourfolds form a countable infinite union of divisors $\C_d$ called \it Hassett's Noether-Lefschetz divisors \rm (for short Hassett divisors) inside the moduli space $\C$ of smooth cubic fourfolds, which  is a 20-dimensional quasi-projective variety. Hassett showed that $\C_d$ is irreducible and nonempty if and only if $d\geq8$ and $d \equiv 0,2\ [6]$. Only few $\C_d$ have been defined explicitly in terms of surfaces contained in a general element of these divisors (see \cite{Ha1}, \cite{Ha2}, \cite{MR3934590}, \cite{russo2019trisecant} , \cite{dp6}, \cite{MR3968870}). More recently, the first and second named authors have deployed similar techniques to study the birational geometry of universal cubics over certain divisors $\C_d$ \cite{AB20}. Hassett \cite{Ha1} proved that, for an infinite set of values of $d$, one can associate a polarized K3 surface $(S,f)$ of degree $d$ to a cubic fourfold in $\C_d$. This is true for $d$ satisfying $4 \nmid  d, 9 \nmid d$, and $ p \nmid d$ for any odd prime number $p \equiv 2\ [3]$, and the association between a cubic $X$ and $S$ is essentially an isomorphism of Hodge structures between certain subgroups of their middle cohomologies (see Sect. \ref{general}). A natural conjecture, supported by Hassett's Hodge theoretical work (\cite{Ha1}, \cite{Ha2}, \cite{Ha3}) and Kuznetsov's derived categorical work \cite{Kuznetsov_2009}, is that any rational cubic fourfold ought to have an associated K3 surface. A link between rationality and the transcendental motive of a cubic fourfold has also been described in \cite{BP20}. For now, every fourfold in $\C_{14}$, $\C_{26}$, $\C_{38}$ and $\C_{42}$ has been proved to be rational (see \cite{MR3968870}, \cite{MR3934590}, \cite{russo2019trisecant}). These cubics all have an associated K3 surface.

As it is customary, if $X$ is a smooth cubic fourfold, we will set $A(X):=H^{2,2}(X) \cap H^4(X,\mathbb{Z})$ for the lattice of codimension 2 algebraic cycles up to rational equivalence, and will denote by $\rho_2(X)$ the rank of $A(X)$. More generally, given a projective variety $Y$, $\rho_1(Y)$ will denote the Picard number.

\medskip

In this paper we explore the relation between K3 surfaces and cubic fourfolds under the lens of Chow motives. The first main result is the analogue for cubic fourfolds of a result of the fourth named author, stating that Singular K3 surfaces (i.e. with Picard number equal to 19 or 20)\footnote{of course this terminology is a bit unfortunate but we will use uppercase "S" for this interpretation and lowercase for the "non smooth" meaning} have finite dimensional Chow motive. As usual, we denote by $F(X)$ the Fano variety of lines associated to a cubic fourfold $X$.

\begin{thm*}\nonumber(=Theorem \ref{main})
Let $X\subset \P^5$ be a cubic fourfold with $\rho_2(X)\geq 20$, then the Chow motives $h(X)$ and $h(F(X))$ are finite dimensional.
\end{thm*}

This result is obtained using the description of $F(X)$ as a moduli space of sheaves on the associated K3 (that has Picard number in the range of Pedrini's theorem), combined with a result of Bülles on the motives of moduli spaces.
A similar argument shows that for the same family of cubic fourfolds, the LLSvS hyperKähler eightfold $L(X)$ also has finite dimensional Chow motive (Proposition \ref{8fold}).

This frame of ideas also allows to show a density result for cubic fourfolds with finite dimensional Chow motive inside the moduli space.

\begin{thm*}\nonumber(=Theorem \ref{densekimura})
Let $d$ be not divisible by 4, 9, or any odd prime number $p \equiv 2\ [3]$. Then there exists a dense (in the complex topology) set of points in a non-empty Zariski open subset inside $\mathcal{C}_d$ such that the corresponding fourfolds have finite dimensional Chow motive.
\end{thm*}

In Section \ref{sect 5} we proceed our study of motives of hyperK\"ahler varieties. By combining results from  Laza-Saccà-Voisin, Floccari-Fu-Zhang and some observations about the topology of $\mathcal{C}_{14}$ from \cite{MR3968870}, we show the existence of an infinity of examples of Laza-Saccà-Voisin hyperKähler 10-folds with finite dimensional and Abelian Chow motive. Adding to the picture some recent results of Li-Pertusi-Zhao \cite{pertusietc} about stability conditions on the Kuznetsov component of the derived category of cubic fourfolds, we manage to show the same result for the twisted version of the hyperK\"ahler 10fold, introduced by Voisin \cite{twisted}. For the twisted version we find an infinite number of examples in each divisor $\C_d$ where cubic fourfolds have associated K3 surfaces.

\smallskip

In the two last sections of the paper, we somehow do reverse engineering. Starting from two explicit examples of families $\mathcal{F}$ and $\mathcal{G}$ of K3 surfaces, we manage to construct two families of Fano fourfolds, whose motives have remarkable properties and are finite dimensional, in an appropriate sense. 
 
 The Fano fourfolds from the first family are smooth, notably they are quadric surface fibrations over $\P^2$. The relation with K3 surfaces is given by the Chow-K\"unneth decomposition of the motive of a quadric fibration due to Vial \cite{vialfib}, and some results of Kuznetsov and C\u{a}ld\u{a}raru. The corresponding K3 surfaces are in fact a 4-dimensional family $\mathcal{F}$ of smooth octic surfaces in $\P^5$, that are complete intersections of 3 quadrics (with particularly simple equations) in $\P^5$. Notably $Q_1,Q_2,Q_3$ are hypersurfaces in $\P^5$ \cite[10.2]{GS}, whose equations are

\begin{equation*}
\sum_{0\le i\le 5} a_ix^2_i=0  ;   \sum_{0\le i\le 5} b_ix^2_i=0  \ ; \ \sum_{0\le i\le 5} c_ix^2_i=0,
\end{equation*}

with complex parameters $a_i, b_i, c_i  $  and  $i=0,\cdots,5$. See Sect. \ref{sezquad} for details.
 
\begin{thm*}(=Theorem \ref{3quadrics})
Let $S$ be a general smooth K3 surface in $\mathcal{F}$ and let $Q_1,Q_2$ and $Q_3$ be the three quadrics in $\P^5$ cutting out $S$. Let $\mathcal{Q}'\to \P^2$ be the quadric surface fibration obtained as hyperbolic reduction of the quadric fourfold fibration $\mathcal{Q}\to \P^2$ defined by the quadrics $Q_i$.  Then the Chow motive $h(\mathcal{Q}')$ is finite dimensional and contains the transcendental motive $t(S)(1)$.
\end{thm*}

Hyperbolic reduction is a classical way to obtain a quadric fibration of relative dimension $n-2$, starting from a quadric fibration of relative dimension $n$ with a section. See Sect. \ref{sect 6} for more details. We observe moreover that the finite dimensionality of the Chow motive of $\mathcal{Q}'$ in relation with the transcendental motive of the K3 surface $S$ is a situation similar to that of other Fano fourfolds, starting from cubic fourfolds \cite{BP20}.

\medskip

The fourfolds from the second family are singular cubics, hence rational. For any cubic $X$ of this family, there exists in fact a birational map $\psi_X:\P^4 \dashrightarrow X$ given by the full linear system of cubics through one 15-nodal, sextic K3 surface in $\P^4$ (see Sect. \ref{famnodal}). Let us denote by $\mathcal{G}$ the family of these sextic surfaces; it is not hard to see that it is 4-dimensional. The motive of a desingularization of such a surface has been recently proven to be finite dimensional and of Abelian type \cite{ILP}. This, combined with some not difficult birational transformations, allows us to show that the motive of the corresponding cubic fourfold is Schur-finite inside Voevodsky's triangulated category of motives $\mathbf{DM_Q}(\C)$. 

\begin{prop*}(=Proposition \ref{isolated})
Let $S$ be any surface from the family $\mathcal{G}$, and $X$ the cubic fourfold obtained via $\psi_X$, then $X$ has Schur finite motive in $\mathbf{DM_\Q}(\C)$, belonging to the subcategory ${\bf DM}^{Ab}_{\Q}$, generated by the motives of curves. 
\end{prop*}

Special families of cubic fourfolds with the same property have been  described  by the third named author in \cite{Lat1} and \cite{Lat2}

\smallskip
\textbf{Plan of the paper:} In Section \ref{general}, we recall some generalities about the moduli space of cubic fourfolds and associated K3 surfaces. In Section \ref{cubicmotives},  we start by recalling some results about the Chow motives of cubic fourfolds and K3 surfaces. Then, we prove Theorem \ref{main} and a couple of similar results about hyperK\"ahler varieties. In Sect. \ref{sect 5}, we address the finite dimensionality and Abelianity of the Chow motives of hyperK\"ahler 10folds associated to the twisted Jacobian fibration of cubic fourfolds. In Sect. \ref{sect 6} we recall that every K3 surface $S$ in the first family $\sF$ has a motive of Abelian type and there is a Kuga-Satake correspondence between $h(S)$ and the motive of a Prym variety  $P$ of dimension 4. Then we describe the second family $\sG$ of K3 surfaces $S$  in $\P^4$ with 15 nodes and show that the motive of a desingularization of $S$ is of Abelian type. In Sect. \ref{sect. 7} we show how to construct Fano fourfolds from K3 surfaces from the families $\mathcal{F}$ and $\mathcal{G}$, and show that their motives are of Abelian type as well.

\smallskip
We would like to thank the anonymous referee for pointing out a mistake in a previous version of the paper.
We would like to thank Asher Auel, Igor Dolgachev, Lie Fu, Alice Garbagnati, Bert Van Geemen, Michael Hoff,  Giulia Saccà, Paolo Stellari and Charles Vial for very appreciated conversations and suggestions about the topics of this paper.

\section{Generalities on cubic fourfolds}\label{general}

\subsection{Moduli spaces of cubic fourfolds and K3 surfaces}\label{scubic}

A cubic fourfold $X$ is a smooth complex cubic hypersurface in $\P^5$. The coarse moduli space of cubic fourfolds $\mathcal{C}$ is a 20-dimensional quasi-projective variety. It can be described as a GIT quotient $\mathcal{C}:= \mathcal{U} // PGL(6,\mathbf{C})$, where $\mathcal{U}$ is the Zariski open subset of $\vert \mathcal{O}_{\P^5}(3) \vert$ parametrizing smooth cubic hypersurfaces in $\P^5$.

 Hassett (\cite{Ha1}, \cite{Ha2}) studied cubic fourfolds via Hodge theory and introduced the notion of \textit{special} cubic fourfolds, that is those containing an algebraic surface whose cohomology class is linearly independent %not homologous to a multiple
of $h^2$, where $h$ is the class of a hyperplane section. Let $A(X):=H^4(X,\Z) \cap H^{2,2}(X)$ be the positive definite lattice of integral middle Hodge classes, that coincides with $CH_2(X)$, the Chow group of 2-cycles on $X$. A cubic $X$ is \textit{special} if and only if the rank of $A(X)$ is at least 2. 

\begin{defi}
A labelling of a \textit{special} cubic fourfold is a rank 2 saturated sublattice $K_d \subseteq A(X)$ containing $h^2$. 
Its discriminant $d$ is the determinant of the intersection form on $K_d$.
%The discriminant $d$ of the labelling  $K_d :=\Z h^2+\Z S$, with $S$ an algebraic surface not homologous to $h^2$, is the determinant of the intersection form on $K_d$:

%\medskip
%$$
%K_d= \begin{tabular}{ l| c r}
%& $h^2$ & $S$ \\
%\hline
%$h^2$ & 3 & $(h^2,S)$\\
%$S$ & $(h^2,S)$ & $(S,S)$\\
%\end{tabular}
%$$

\end{defi}
\medskip

Special cubic fourfolds  with labelling of discriminant $d$ form a countably infinite union of divisors $\mathcal{C}_d \subset \mathcal{C}$, called Hassett divisors. Hassett \cite[Theorem 1.0.1]{Ha1} showed that $\mathcal{C}_d$ is irreducible and nonempty if and only if 

\begin{center}
$(*)\ \ \ \ d \geq 8$ and $d \equiv 0,2$ $[6]$ \label{*}.
\end{center}

Moreover, in certain cases, one can associate to a cubic fourfold a K3 surface. More precisely, there exists a polarized K3 surface $S$ of degree $d$ such that $K_d^{\perp} \subset H^4(X,\Z)$ is Hodge-isometric to $H^2_{prim}(S,\Z)(-1)$ if and only if 

\begin{center}
$(*')$ \ \ \ \ $d$ is not divisible by 4, 9, or any odd prime number $p \equiv 2\ [3]$  \label{*'}.
\end{center}

Recall that $\rho_1(S)=rk(NS(S))$ and $\rho_2(X)=rk(A(X))$. Whenever $X$ has an associated K3 surface $S$, we have that $\rho_1(S)=\rho_2(X)-1$. 
 
For infinitely many values of $d$, and for the generic cubic fourfold $X\in\mathcal{C}_d$, the Fano variety $F(X)$ of lines on the cubic fourfold is isomorphic to the Hilbert scheme of length two subschemes $S^{[2]}$ of the associated K3 surface $S$. This holds if $d=2(n^2+n+1)$ for an integer $n\geq2$.

\section{Chow motives of cubic fourfolds and K3 surfaces}\label{cubicmotives}

\renewcommand{\L}{\mathbf{L}}

%\noindent 

%Let  $X \subset \P^5$ be a smooth cubic 4-fold.
%In this section, we will apply the intersection theoretical results from the preceding section to the construction of families of cubic fourfolds with finite dimensional Chow motive of Abelian type.

%\medskip
 
The first result we need to recall from \cite{BP20} is the existence of a Chow-K\"unneth  decomposition for the motive of the cubic fourfold $X$. Namely, we have

\begin{equation}\label{CK}
h(X) = \un \oplus \L \oplus (\L^2)^{\rho_2(X)}\oplus t(X)\oplus \L^3 \oplus \L^4, 
\end{equation}

 where $\L$ is the Lefschetz motive and $t(X)$ is the transcendental motive of $X$, i.e. the realization functor gives $H^*(t(X)) =H^4_{tr}(X,\Q)$. 

\medskip

HyperKähler varieties related to cubic fourfolds have descriptions as moduli spaces of sheaves on K3 surfaces, in some different ways. One of them is the following.

\begin{thm}\label{thmprieto}\cite[Theorem 1]{prieto}
Let $Y$ be a projective hyperKähler manifold of $K3^{[n]}$-type, $n \geq 2$,
of Picard number at least four. Then, $Y$ is isomorphic to some moduli space $M_H(S,\alpha,w)$
of twisted stable sheaves on a K3 surface, where $H$ is a polarization on the K3 surface $S$, $\alpha$ is a Brauer class, and $w$ a Mukai vector.
\end{thm}

In particular, we will apply this Theorem to $Y=F(X)$, the Fano variety of lines of a cubic fourfold $X$. The finite dimensionality of the motive of the moduli space will be estimated via the following result of Bülles.

\begin{thm}\label{bullesmoduli}\cite[Theorem 0.1]{Bu}
Let $S$ be a projective K3 surface or an abelian surface and $\alpha \in Br(S)$.
Assume that $M$ is one of the following:

\begin{itemize}

\item a smooth projective moduli space of Gieseker stable $\alpha$-twisted sheaves, or
\item a smooth projective moduli space of $\sigma$-stable objects in $\mathbf{D}^b(S,\alpha)$, where $\sigma$ is a generic
stability condition.

\end{itemize}

Then the Chow motive $h(M)$ of $M$ is a direct summand of a motive $\bigoplus h(S^{k_i})(n_i)$ for some
$1 \leq k_i \leq \dim (M),$ $n_i \in \mathbb{Z}$.
\end{thm}

Let now $X$ be a special cubic fourfold contained in a divisor $\mathcal{C}_d$. In \cite{Bu}, B\"ulles shows that for certain values of $d$, there exists a K3 surface $S$ such that 

\begin{equation}\label{motk3}
t(X) \simeq t_2(S)(1).
\end{equation}\\
Here $t_2(S)$ is the transcendental motive of $S$ , i.e.

$$h(S)=\un \oplus \L^{\rho_1(S)}\oplus t_2(S) \oplus \L^2,$$\\
where $\rho_1(S)$ is the rank of the N\'eron-Severi group $NS(S)$. More precisely, the isomorphism (\ref{motk3}) holds whenever $d$ satisfies the following numerical condition

$$(***)\ \ \ \  \exists  f,g \in \mathbb{Z}  \  with \  g | (2n^2+2n +2)   \  n\in \mathbb{N}  \ and \   d =f^2g.$$

Therefore, in this case, $h(X) \in \M^{Ab}_{rat}(\mathbf{C})$ if and only if $h(S)\in \M^{Ab}_{rat}(\mathbf{C})$. Note that $(*')$ implies $(***)$ and that an isomorphism $t(X)\simeq t_2(S)(1)$ can never hold if $X$ is not special, i.e. if $\rho_2(X)=1$, see \cite[Proposition 3.4]{BP20}. 
%Remark moreover that $d=14$  satisfies $(***)$ with  $f=1,\ n=2$ and $g=14$.
 
\medskip

On the other hand, finite dimensionality of motives of K3 surfaces has been addressed in \cite{Ped}. In particular the following is proved:

\begin{thm}\label{k3motive}
Let $S$ be a smooth complex projective K3 surface with $\rho_1(S) = 19, 20$.
Then the motive $h(S) \in \mathcal{M}_{rat} (\mathbf{C})$ is finite dimensional and lies in the subcategory
$\mathcal{M}_{rat}^{Ab} (\mathbf{C})$. 
\end{thm}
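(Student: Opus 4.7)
The plan is to reduce to the case of Kummer surfaces via Shioda--Inose theory. By a theorem of Morrison, every K3 surface $S$ with $\rho(S)\in\{19,20\}$ admits a Shioda--Inose structure: a Nikulin involution $\iota$ on $S$ whose minimal desingularized quotient is isomorphic to a Kummer surface $\mathrm{Km}(A)$ associated to some Abelian surface $A$. This produces an explicit algebraic correspondence between $S$ and $\mathrm{Km}(A)$ inducing a Hodge isometry on the rational transcendental lattices. I would then lift this to an isomorphism of rational transcendental Chow motives $t_2(S)\cong t_2(\mathrm{Km}(A))$, which is the motivic incarnation of the Shioda--Inose correspondence and is made possible by the fact that the transcendental motive of a K3 surface is determined by the transcendental Hodge structure once a compatible algebraic cycle is exhibited.

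Next I would invoke the classical description of the motive of a Kummer surface: $h(\mathrm{Km}(A))$ decomposes as the sum of the $\{\pm 1\}$-invariant direct summand of $h(A)$ together with sixteen Lefschetz summands $\L$ coming from the exceptional divisors over the nodes of $A/\{\pm 1\}$. Since $h(A)\in\M^{Ab}_{rat}(\mathbf{C})$ by definition, and this category is thick and closed under direct summands, $h(\mathrm{Km}(A))$ also lies in $\M^{Ab}_{rat}(\mathbf{C})$; moreover $h(A)$ is finite dimensional by Kimura's theorem for Abelian varieties, and the same finite dimensionality passes to direct summands. Hence $t_2(\mathrm{Km}(A))$, and therefore $t_2(S)$, is both finite dimensional and Abelian.

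Combining this with the standard Chow--Künneth decomposition $h(S)=\un\oplus\L^{\rho(S)}\oplus t_2(S)\oplus\L^2$ employed throughout the paper, the theorem follows, since $\un$ and the powers $\L^{k}$ manifestly belong to $\M^{Ab}_{rat}(\mathbf{C})$ and are finite dimensional. The main obstacle lies in the $\rho(S)=19$ case: while for $\rho(S)=20$ the rank-$2$ positive-definite lattice $T(S)$ embeds primitively into $U^{\oplus 3}$ essentially for free, for $\rho(S)=19$ one must verify Morrison's criterion that the rank-$3$ lattice $T(S)$ of signature $(2,1)$ admits a primitive embedding into $U^{\oplus 3}$. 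This lattice-theoretic verification, settled by Morrison via genus-theoretic analysis of the discriminant form, is the genuinely nontrivial ingredient; once it is available, everything else proceeds by assembling known motivic packages.
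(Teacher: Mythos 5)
The paper does not prove this theorem itself; it imports it from \cite{Ped}, and your reconstruction follows exactly the strategy of that reference (Shioda--Inose structure, reduction to a Kummer surface, descent of finite dimensionality and Abelianity from $h(A)$). The skeleton is therefore the right one, and your endpoints are fine: Morrison's theorem does give the Shioda--Inose structure for $\rho(S)=19,20$, and $h(\mathrm{Km}(A))$ is indeed a direct summand of $h(A)\oplus\L^{\oplus 16}$, hence finite dimensional and Abelian.

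The gap is in the step where you pass from the Hodge-theoretic correspondence to the isomorphism $t_2(S)\cong t_2(\mathrm{Km}(A))$ of Chow motives. Your stated justification --- that ``the transcendental motive of a K3 surface is determined by the transcendental Hodge structure once a compatible algebraic cycle is exhibited'' --- is not a valid principle: an algebraic correspondence inducing an isometry of rational transcendental lattices gives an isomorphism of the transcendental motives modulo \emph{homological} equivalence, but lifting this to rational equivalence is precisely the kind of conservativity statement that is only available once finite dimensionality is already known, so as stated the argument is circular. What actually closes the gap (and is the real content of the proof in \cite{Ped}, reproduced in this paper as Lemma \ref{symplectic} for the group $(\Z_2)^4$) is the theorem of Voisin and Huybrechts \cite{Huy2} that a symplectic automorphism of finite order of a K3 surface acts trivially on $A_0(S)$. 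Concretely: the degree-two quotient map $S\dashrightarrow \mathrm{Km}(A)$ exhibits $t_2(\mathrm{Km}(A))$ as a direct summand of $t_2(S)$, say $t_2(S)=t_2(\mathrm{Km}(A))\oplus N$; the summand $N$ has trivial cohomology because the Nikulin involution is symplectic, and it has trivial Chow groups \emph{only because} the involution acts trivially on $A_0(S)$; one then invokes the lemma of Gorchinskiy--Guletskii to conclude $N=0$. Without the $\CH_0$-triviality input, $N$ could a priori be a nonzero ``phantom'' summand and the reduction to the Kummer surface would fail. Relatedly, you locate the ``genuinely nontrivial ingredient'' in Morrison's lattice-theoretic embedding criterion; that is needed, but for the motivic statement the deeper input is this action on zero-cycles, which your write-up omits entirely.
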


We will abuse slightly of terminology by calling \it Abelian \rm a Chow motive that lives in the subcategory $\mathcal{M}_{rat}^{Ab} (\mathbf{C})$.
The last Theorem we need to recall before passing to proving our first main result is the following, that relates the motive of a cubic hypersurface and that of its Fano variety of lines. This is \cite[Theorem 3]{FLV}.

\begin{thm}\label{fanoandcubic}
Let $Y$ be a smooth cubic hypersurface in $\P^{n+1}$ and $F(Y)$ the associated Fano variety of lines on $Y$. We have an isomorphism of Chow motives

\begin{equation}\label{FLViso}
h(F(Y))\cong \Sym^2(h^n(Y)_{prim}(1)) \oplus \bigoplus_{i=1}^{n-1}h^n(Y)_{prim}(2-i)\oplus \bigoplus_{k=0}^{2n-4}\un (-k)^{\oplus a_k},
\end{equation}

where 

\begin{equation}
a_k= \begin{cases}
  \lfloor \frac{k+2}{2} \rfloor & \text{if } k < n-2 \\
  \lfloor \frac{n-2}{2} \rfloor  & \text{if } k= n-2 \\
  \lfloor \frac{2n -2 -k}{2} \rfloor & \text{if } k> n-2. \\
\end{cases}
\end{equation}

\end{thm}

%In \cite{Prieto}, the author observes that a combination of a result by Morrison \cite[Cor. 2.10}{Morrison84} with her main result \cite[Theorem 1]{Prieto} implies that, whenever $\rho_1(F(X))\geq 13$, $F(X)$ is isomorphic to a moduli space of stable sheaves $M_H(S,v)$ on a polarized K3 surface $(S,H)$, with Mukai vector $v$.

\subsection{Cubic fourfolds with finite dimensional motive.} 

The goal of this section is proving our first main result.

\begin{thm}\label{main}
Let $X\subset \P^5$ a cubic fourfold with $\rho_2(X)\geq 20$, then the Chow motives $h(X)$ and $h(F(X))$ are finite dimensional.
\end{thm}

\begin{proof}
Let $F(X)$ be the Fano variety of lines of $X$. By the Abel-Jacobi isomorphism $H^4(X,\mathbb{Z})_{prim}\cong H^2(F(X),\mathbb{Z})_{prim}(-1)$ we get that the Picard number $\rho_1(F(X))$ is equal to $\rho_2(X)\geq 20$. Hence, by Theorem \ref{thmprieto}, $F(X)$ is isomorphic to a certain moduli space $M_v(S,\alpha)$ of twisted stable sheaves on a K3 surface $S$ (where $v$ is the Mukai vector, and $\alpha$ a Brauer class).  More precisely, in \cite{prieto}, the author observes that a combination of a result by Morrison \cite[Cor. 2.10]{Morrison84} with Theorem \ref{thmprieto} implies that, whenever $\rho_1(F(X))\geq 13$, $F(X)$ is isomorphic to a moduli space of (non-twisted) stable sheaves $M_H(S,v)$ on a polarized K3 surface $(S,H)$, with Mukai vector $v$. 
This is the case in our situation and, by Theorem 1 of \cite{add2conj}, it directly implies that $S$ is the K3 surface associated to $X$.
On the other hand, by Theorem \ref{bullesmoduli}, we have that $h(M_H(S,v))$ is a direct summand of $\bigoplus_i (S^{k_i})(n_i)$, for some $1 \leq k_i \leq \dim (M)$, and $n_i \in \mathbb{Z}$. Since $S$ is the K3 surface associated to $X$, the Picard number $\rho_1(S)$ is equal to $\rho_2(X)-1$, which means $\rho_1(S) \geq 19$. By Theorem \ref{k3motive}, we know that K3 surfaces with $\rho_1(S)\geq 19$ have finite dimensional and Abelian Chow motive, and by a classical result of Kimura \cite{kimurafinite}, the Chow motive of powers of $S$ is finite dimensional too. This in turn implies that $h(M_H(S,v))=h(F(X))$ is finite dimensional. Now, by Equation \ref{FLViso}, we get that also the $h^4(X)_{prim}$ is finite dimensional and Abelian. This contains the transcendental motive $t(X)$, which by Equation \ref{CK} is possibly the only non finite dimensional summand in the Chow-Künneth decomposition. This concludes the proof.

\end{proof}

By results of \cite{BFMQ}, when $F(X)$ is the variety of lines of a cubic $X\in\mathcal{C}_{12}$, then it is birational to two EPW double sextics $E_{F(X)}$ and $E'_{F(X)}$. Since birational hyperKähler varieties have isomorphic Chow motives \cite{riess} , this automatically gives us the following corollary.

\begin{cor}
Let $X\in \mathcal{C}_{12}$ with $\rho_2(X)>19$, then the Chow motives of $E_{F(X)}$ and $E'_{F(X)}$ are finite dimensional and Abelian.
\end{cor}

\subsection{Density of cubic fourfolds with finite dimensional motive}\label{density}

More generally, let $\mathcal{G}_d$ be the moduli space of polarized K3 surfaces of degree d. This is a quasi-projective 19-dimensional algebraic variety.

\begin{thm}\label{densekimura}
Let $d$ be not divisible by 4, 9, or any odd prime number $p \equiv 2\ [3]$. Then there exists a dense (in the complex topology) set of points in a non-empty Zariski open subset inside $\mathcal{C}_d$ such that the corresponding fourfolds have finite dimensional Chow motive.
\end{thm}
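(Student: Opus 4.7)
Fix $d$ satisfying $(*')$. My approach combines the existence result from the corollary to Theorem~\ref{main2}---a countable infinity of one-dimensional families of cubics in $\mathcal{C}_d$ with finite-dimensional Abelian Chow motive---with a classical density statement for Noether--Lefschetz loci in the $19$-dimensional moduli space $\mathcal{C}_d$. By the proof of Theorem~\ref{main2}, each such one-dimensional family has the form
$$\mathcal{B}_\sigma \;=\; \mathcal{C}_d \cap \mathcal{C}_{d_2} \cap \bigcap_{k=3}^{19} \mathcal{C}_{d_k},$$
where $d_2$ satisfies condition $(***)$---so that the isomorphism (\ref{motk3}) applies via the $\mathcal{C}_{d_2}$-labelling, transferring finite-dimensionality from the degree-$d_2$ associated K3 surface (which has N\'eron--Severi rank at least $19$ by construction, hence finite-dimensional Abelian motive by Theorem~\ref{k3motive}) to $h(X)$ through the Chow--K\"unneth decomposition (\ref{CK})---while the remaining $d_k$'s satisfy the conditions of Theorem~\ref{thm}. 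Varying $\sigma = (d_2,d_3,\ldots,d_{19})$ over the countably many admissible tuples, I obtain a countable collection $\{\mathcal{B}_\sigma\}$ of one-dimensional closed subvarieties of $\mathcal{C}_d$, every point of which parametrizes a cubic fourfold with finite-dimensional Chow motive.

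The central task is to prove that $\bigcup_\sigma \mathcal{B}_\sigma$ is dense in the complex topology inside a non-empty Zariski open subset $U \subset \mathcal{C}_d$. On a Zariski open locus $U$ where the local period map $U \to \mathcal{D}$ is a biholomorphism onto its image in the $19$-dimensional period domain $\mathcal{D}$, Noether--Lefschetz subvarieties in $U$ correspond to analytic cuts of $\mathcal{D}$ by integer orthogonality conditions against primitive cohomology lattice vectors. By classical density results for Noether--Lefschetz loci, iterated higher-codimension intersections of such NL divisors are analytically dense in $\mathcal{D}$. The plan is to argue the density in two stages: first, show that the union $\bigcup_{d_2} \mathcal{C}_d \cap \mathcal{C}_{d_2}$, with $d_2$ ranging over discriminants satisfying $(***)$, is dense in $U$ for the complex topology; second, show that inside each codimension-one slice $\mathcal{C}_d \cap \mathcal{C}_{d_2}$ the family of one-dimensional intersections $\mathcal{B}_\sigma$ (obtained by further imposing the $17$ Noether--Lefschetz conditions supplied by Theorem~\ref{thm}) is dense in the slice. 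Choosing a countable dense subset of points in each $\mathcal{B}_\sigma$ and taking the union over $\sigma$ then yields the desired countable complex-topology dense subset of $U$ consisting of cubics with finite-dimensional motive.

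The main obstacle is the first density stage. Whereas the classical NL density guarantees that the union of \emph{all} NL divisors $\mathcal{C}_{d_2}$ in $\mathcal{C}_d$ is analytically dense, here one must restrict to the arithmetically constrained set of $d_2 = f^2 g$ with $g \mid 2(n^2+n+1)$. To verify that this subset is still rich enough, I observe that for every residue class modulo $6$ compatible with $(*)$, the set $\{f^2 \cdot 2(n^2+n+1) : f,n \in \N\}$ contains infinitely many elements, which makes the associated primitive lattice vectors dense in the period domain in the required sense. Translating this arithmetic density through the period isomorphism to obtain topological density in $\mathcal{C}_d$ is the technical heart of the argument; once this is granted, the second stage follows by iterating the classical NL density inside the $18$-dimensional subvariety $\mathcal{C}_d \cap \mathcal{C}_{d_2}$, and the conclusion of the theorem is immediate.
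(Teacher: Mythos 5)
Your route is genuinely different from the paper's, but as written it contains a real gap: both density claims on which it rests are asserted rather than proved. The paper does not go through the families $\mathcal{B}_\sigma$ at all. It instead uses the association map $\mathcal{G}_{\frac{d+2}{2}} \dashrightarrow \mathcal{C}_d$, $S_X \mapsto X$, which is rational and dominant for $d$ in the stated range, and pulls back the \emph{known} density (in the complex topology) of singular K3 surfaces, i.e.\ those with $\rho = 20$, inside the 19-dimensional moduli space $\mathcal{G}_{\frac{d+2}{2}}$ --- a countable set whose density is classical (same argument as the surjectivity/density statements for K3 periods). Since those K3 surfaces have finite dimensional motive by Theorem \ref{k3motive}, and finite dimensionality transfers to the cubic through the Chow--K\"unneth decomposition, dominance of the association map immediately gives a countable dense set in a Zariski open subset of $\mathcal{C}_d$. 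This is a one-step reduction to a citable classical fact, and it avoids every arithmetic issue you wrestle with.

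The gap in your argument is concentrated in the sentence ``the set $\{f^2\cdot 2(n^2+n+1)\}$ contains infinitely many elements, which makes the associated primitive lattice vectors dense in the period domain in the required sense.'' Infinitude of a set of admissible discriminants does not by itself yield complex-topology density of the union of the corresponding Noether--Lefschetz loci: the classical density statements concern the union over \emph{all} primitive classes (or all classes with self-intersection in an unbounded range), and restricting to an arithmetically thin subfamily requires verifying a density criterion (\`a la Green/Voisin) or a monodromy-orbit argument that you do not supply. The same problem recurs, in worse form, at your second stage: the curves $\mathcal{B}_\sigma$ produced by Theorem \ref{thm} are single non-empty components $\mathcal{C}_M$ attached to one explicit lattice embedding each; the paper's appendix proves only non-emptiness, and says nothing about how these particular components distribute inside the 18-dimensional slice $\mathcal{C}_d\cap\mathcal{C}_{d_2}$, so density of $\bigcup_\sigma\mathcal{B}_\sigma$ in the slice is not established by anything you cite. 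Until both stages are actually proved, the proposal reduces the theorem to two unproven density statements, each of which is at least as hard as the theorem itself; the paper's detour through the moduli space of K3 surfaces is precisely what makes these issues disappear.
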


\begin{proof}
Let $X\in \mathcal{C}_d$, for $d$ in the range of the claim here above. That is: $X$ has one (or two, see \cite{Ha1}) associated polarized K3 surface $S_X$ in $\mathcal{G}_{\frac{d+2}{2}}$. Then the map

\begin{eqnarray}\label{asso}
\mathcal{G}_{\frac{d+2}{2}} & \to & \mathcal{C}_d;\\
S_X & \mapsto & X;
\end{eqnarray}\\
is rational and dominant. Hence, if $d$ is in the range here above, there exists an open set $\mathcal{U}_d$ of $\mathcal{C}_d$ such that for every $X\in \mathcal{U}_d$ there exists a K3 surface $S_X$ of degree $d$ associated to $X$. We observe also that Singular K3 surfaces form a  subset of the moduli space $\mathcal{G}_{\frac{d+2}{2}}$ which is dense in the complex topology. The proof of this fact goes along the same lines as the proof of the density of all K3 surfaces in the period domain (see  \cite[Corollary VIII.8.5]{ccs}). By the dominance of the map in $(\ref{asso})$, this directly implies the claim.
\end{proof}

\subsection{Some remarks on hyperK\"ahler varieties}

In this section, we draw consequences on Abelianity and finite dimensionality of the motive of some hyperK\"ahler varieties related to cubic fourfolds from the preceding results. As we have seen in the preceding section, the Chow motive of the Fano variety of $X$ is finite dimensional whenever $\rho_2(X)\geq 20$.

\medskip

Let $L(X)$ denote the 8-fold constructed in \cite{LLSvS} from the space of twisted cubic curves on a cubic fourfold not containing a plane. The 4-dimensional $F(X)$ is deformation equivalent to the Hilbert scheme  $S^{[2]}$, with $S$ a K3 surface, while $L(X)$ is deformation equivalent to $S^{[4]}$. 
For every even complex dimension $2n$ there are two known deformations classes of irreducible holomorphic symplectic varieties: the Hilbert scheme $S^{[n]}$ of n-points on a K3 surface $S$ and the generalized Kummer. A generalized Kummer variety $Y$ is of the form $Y=K^n(A)=a^{-1}(0)$, where $A$ is an Abelian surface and $a:  A^{[n+1]} \to A$ is the Albanese map. 
In dimension 10 there is also an example, usually referred as OG10,  discovered by O'Grady. The hyperK\"ahler variety OG10 is not deformation equivalent to $S^{[5]}$.

\medskip

Let  $\M_A(\mathbf{C})$ be  the category of Andr\'e motives which is obtained from the category of homological motives $\M_{hom}(\mathbf{C})$ by formally adjoining the Lefschetz involutions $*_L$ associated to the  Lefschetz  isomorphisms $L^{d-i}: H^i(X) \to H^{2d-i}(X)$, where $L^{d-i}$ is induced by the hyperplane section. By the Standard Conjecture $B(X)$, for every $i\le d$ there exists an algebraic correspondence inducing the isomorphism $H^{2d-i}(X)\to H(X)$ inverse to $L^{d-i}$. Therefore, under $B(X)$ the category of Andr\'e motives coincides with $\M_{hom}(\mathbf{C})$. The Andr\'e motive of a K3  surface $S$  and of  a cubic fourfold $X$ belong to the  full subcategory  $\M^{Ab}_{A}(\mathbf{C}) $ generated by the motives of Abelian varieties, see \cite[10.2.4.1]{An}.

\medskip

In \cite{Sc} it is proved that the Andr\'e motive of a hyperK\"ahler variety which is  deformation equivalent  to  $S^{[n]}$ lies in $\M^{Ab}_A(\mathbf{C})$.  Soldatenkov \cite{So} proves that  if $X_1$ and $X_2$ are  deformation equivalent projective hyperK\"ahler manifolds then the Andr\'e motive of $X_1$  is Abelian if and only if the Andr\'e motive of $X_2$ is Abelian. In a recent preprint (see \cite{FFZ})  it is proved that also the Andr\'e motive of OG10 lies in $\M^{Ab}_{A}(\mathbf{C})$. Therefore the Andr\'e motives of all the known deformation classes of hyperK\"ahler varieties lie in $\M^{Ab}_A(\mathbf{C})$. These results suggest the following conjecture:

\begin{conj}  The motive of a hyperK\"ahler manifold  is of Abelian type in $\M_{rat}(\mathbf{C})$.\end{conj}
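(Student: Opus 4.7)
The plan is to handle the conjecture deformation class by deformation class, since by Soldatenkov~\cite{So} Abelianity of the Andr\'e motive is constant in a smooth family of projective Hyperk\"ahler manifolds, and by the results recalled just before the statement, each known deformation class has Abelian Andr\'e motive. The task is therefore to upgrade, in each class, from Andr\'e motives to Chow motives in $\M_{rat}(\mathbf{C})$; if this could be done in every known class, together with the expectation that no further deformation class exists, it would settle the conjecture.

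For the K3$^{[n]}$-type I would start from a K3 surface $S$ whose Chow motive is already Abelian, for instance those of Thm.~\ref{k3motive} with $\rho(S)\geq 19$, or the examples in the families $\mathcal{F}$ and $\mathcal{G}$ discussed later in the paper. The theorem of de~Cataldo--Migliorini expresses $h(S^{[n]})$ in terms of symmetric powers $\Sym^k h(S)$, so Abelianity of $h(S)$ propagates to $h(S^{[n]})$ since $\M^{Ab}_{rat}(\mathbf{C})$ is a thick, rigid, tensor subcategory; this already produces a countable family of K3$^{[n]}$ manifolds with Abelian Chow motive, inside every irreducible component of the moduli space. Passing from these distinguished points to every projective deformation at the Chow level is not automatic, because deformation invariance \`a la Soldatenkov is only known for Andr\'e motives, so I would attempt to bridge the gap by using Voisin's and Shen--Vial's multiplicative Chow-K\"unneth decompositions, which are rigid enough to be transported along birational maps of the underlying Hyperk\"ahler moduli. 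For generalized Kummer manifolds $K^n(A)$ I would exploit the realization of $K^n(A)$ as a fibre of the Albanese map $A^{[n+1]}\to A$: the motive of $A^{[n+1]}$ is a direct summand of $\Sym^{n+1}h(A)$, hence Abelian, and the fibre motive should be cut out by a suitable Chow-K\"unneth projector associated to the Albanese morphism. For OG10 and OG6 I would lean on the geometric realizations of Laza--Sacc\`a--Voisin and of Mongardi--Rapagnetta--Sacc\`a as (desingularized) intermediate Jacobian fibrations over $\P^5$ attached to cubic fourfolds; combined with the families of cubic fourfolds with Abelian Chow motive produced in Thm.~\ref{main}, this should yield a countable infinity of OG10's and OG6's with Abelian Chow motive, lifting to the Chow category the Andr\'e-level result of~\cite{FFZ}.

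The main obstacle is precisely this last upgrade from Andr\'e to Chow. The category $\M_A(\mathbf{C})$ is obtained from $\M_{hom}(\mathbf{C})$ by formally adjoining the Lefschetz involutions $*_L$, and $\M_{rat}(\mathbf{C})$ surjects onto $\M_{hom}(\mathbf{C})$ with kernel controlled by the difference between rational and homological equivalence on products; transporting Abelianity through these kernels amounts to a form of the Bloch--Beilinson conjecture, or equivalently to Kimura finite dimensionality for a much wider class of varieties than is currently available. Consequently, an unconditional proof of the full conjecture seems out of reach. A sensible intermediate target would be to establish finite dimensionality and Abelianity of $h(F(X))$ and $h(L(X))$ for every cubic fourfold $X$ lying in a Hassett divisor $\C_d$ whose associated K3 surface has large Picard rank, extending the range of Thm.~\ref{main} from the cubic fourfold to the attached Hyperk\"ahler varieties, and then to use the wealth of such examples as a dense set of distinguished fibres inside each deformation family.
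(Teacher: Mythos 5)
The statement you have been asked to prove is presented in the paper as a \emph{conjecture}, and the paper gives no proof of it: it is offered as an expectation motivated by the fact that the Andr\'e motives of all known deformation classes of Hyperk\"ahler varieties are Abelian (the results of \cite{Sc}, \cite{So} and \cite{FFZ} recalled immediately before the statement). There is therefore no argument in the paper against which to measure your proposal, and, to your credit, you do not actually claim to close the argument: you correctly isolate the passage from Andr\'e motives to Chow motives as the fundamental obstruction, and you correctly observe that bridging the kernel of $\M_{rat}(\mathbf{C})\to\M_{hom}(\mathbf{C})$ in the required generality would amount to something of the strength of Bloch--Beilinson or of Kimura finite dimensionality for a vast class of varieties. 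That diagnosis is precisely why the statement is a conjecture rather than a theorem, and no amount of rearranging the known partial results will remove it.

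Two further gaps in the plan deserve explicit mention. First, even a complete treatment of the known deformation classes would not settle the conjecture as stated, because the classification of deformation classes of Hyperk\"ahler manifolds is itself open; ``the expectation that no further deformation class exists'' is an additional unproved hypothesis that you smuggle in. Second, the step from a countable (even dense) set of distinguished members of a family to \emph{every} projective member is exactly where deformation-invariance arguments fail at the level of rational equivalence: Abelianity of the Chow motive is not known to be an open or a closed condition in families, and multiplicative Chow--K\"unneth decompositions, however rigid, are not known to transport Abelianity along deformations or to specialize well. What your proposal does legitimately establish --- Abelianity of $h(S^{[n]})$ for $S$ with Abelian motive via \cite{decamiglio}, of $h(F(X))$ and $h(L(X))$ for the cubics of Theorem \ref{main2}, and of the LSV tenfolds attached to suitable Pfaffian cubics --- is in substance the collection of partial results that the paper itself proves (Propositions \ref{fano} and \ref{8fold} and Section 5) as evidence \emph{for} the conjecture, not a proof of it.
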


%By \cite[Sect. 6]{decamiglio}, the Hilbert scheme $S^{[n]}$ of a K3 surface with finite dimensional (or Abelian) motive has finite dimensional (or Abelian) motive. Now, recall that our family $\mathcal{B}$ of cubic fourfolds from Thm. \ref{main2} entirely lies in $\mathcal{C}_{14}$. Moreover, for all cubics $X$ in $\mathcal{C}_{14}$, we have an birational map $F(X)\sim S^{[2]}$ \cite{Ha1}, where $S$ is the associated K3 surface. Since birational HyperK\"ahler varieties have isomorphic Chow motives (see \cite{Riess}), it is straightforward to check that we have the following:

%\begin{prop}\label{fano}
%All HyperK\"ahler fourfolds $F(X)$, $X\in \mathcal{B}$, have finitely generated
%and Abelian Chow motive.
%\end{prop}

%\begin{rem}
%Once again, we can play the same game as for Corollary \ref{inffam}, by changing coefficients of the vectors in the basis of the lattice, and obtain different families of cubic fourfolds, all contained in $\C_{14}$.
% by taking $\mathcal{C}_{\frac{2n^2+2n+2}{a^2}}$, $n,a\in \Z$, instead of $\mathcal{C}_{14}$. By \cite[Thm. 2]{Add16}, having $d=\frac{2n^2+2n+2}{a^2}$ is equivalent to having a birational equivalence between $F(X)$ and the Hilbert square of the associated K3 surface.
%Hence, everything runs the same way, and we have a countably infinite set of families of Fano varieties $F(X)$ with finite dimensional and Abelian motive, whose cubic fourfolds all lie in a fixed $\mathcal{C}_d$.
%\end{rem}

Let us now consider the hyperK\"ahler 8fold $L(X)$. In order to define properly $L(X)$, we need to assume that $X$ does not contain a plane, i.e. $X\not\in\mathcal{C}_8$. Then, the analogue of Theorem \ref{main} is the following:

\begin{prop}\label{8fold}
Let $X\subset \P^5$ a cubic fourfold with $\rho_2(X)\geq 20$.
Then the hyperK\"ahler 8fold $L(X)$ has finite dimensional and Abelian Chow motive.
\end{prop}

\begin{proof}
By the results in \cite{BLMS} and \cite{LPZ} we know that $L(X)$ is a moduli space of stable objects in $\mathcal{K}u(X)$ for a certain Mukai vector. This also implies that $\rho_1(L(X))=\rho_2(X)-1\geq 19$. Since $L(X)$ is a hyperKähler variety of $K3^{[4]}$-type, Theorem \ref{thmprieto} implies that $L(X)$ is in particular a moduli space $M_H(S,v)$ of stable sheaves over a polarized K3 surface. The same argument as in the case of $F(X)$ via the results of Bülles and the associated (Singular) K3 surface implies that the Chow motive $h(L(X))$ is finite dimensional.

%In \cite{BMP} it has been proved that for values of $d$ that verify the equality

%\begin{equation}\label{llsvs}
%d=2(3a^2+3a+1),\ \ a>1,
%\end{equation}

%the 8fold $Z$ is isomorphic to the Hilbert scheme of fourtuples of points $S^{[4]}$.
%Let $X\in \mathcal{C}_d$ be a cubic fourfold not containing a plane, and $S$ its associated K3 surface. In \cite[Thm. 3]{AddGio} the authors show that the 8fold $L(X)$ is birational to $S^{[4]}$ if and only if

%$$(***')\ \ \ d=\frac{6n^2+6n+2}{a^2},\ \ n,a\in\Z.$$

%The first integer of the list is once again 14, hence for all the cubic fourfolds of our family $\mathcal{B}$ we have $L(X)\stackrel{birat}{\cong} S^{[4]}$. Since birational HyperK\"ahler varieties have isomorphic Chow motives  \cite{Riess}, the results from \cite{decamiglio} complete the proof, and we have a one-dimensional family of $L(X)$ with finite dimensional and Abelian motive for all $\mathcal{C}_d$, with $d\neq 8$.
%\begin{equation}\label{effe}
%\mathcal{F}:=\mathcal{C}_d\cap \mathcal{C}_{38}%\cap (\bigcap_{k=1}^{17}) \neq \emptyset.
%\end{equation}
%Cubics in $\mathcal{C}_{38}$ have an associated K3 and in fact 38 is the smallest possible value for which (\ref{llsvs}) holds. This implies that for cubics in the family defined in (\ref{effe}), we have $L(X)\cong S^{[4]}$ and the results from \cite{decamiglio} complete the proof.
%
\end{proof}

%\begin{rem}\label{both}
%In fact, we observe that $d=14$ verifies both condition $(***)$ (with $f=1$) and $(***')$ (with $n=a=1$). 
%Then the intersection
%\begin{equation}\label{both}
%\mathcal{F}:=\mathcal{C}_d\cap \mathcal{C}_{182}\cap (\bigcap_{k=1}^{17})\mathcal{C}_{d_k}  \neq \emptyset
%\end{equation}
%defines a one dimensional family, inside any $\mathcal{C}_d$ of 
%This means that for cubic fourfolds $X$ in $\mathcal{B}$ {\it both} $F(X)$ {\it and} $L(X)$ 8folds have finite dimensional and Abelian motive.
%\end{rem}

\section{A remark on the finite dimensionality of the motive of the LSV 10 dimensional hyperKähler manifold} \label{sect 5}

\subsection{The intermediate Jacobian fibration}

\newcommand{\cJ}{\mathcal{J}}

Let $X\subset \P^5$ be a smooth cubic fourfold, and $\cJ(X)$ the 10 dimensional hyperKähler manifold constructed in \cite{LSV}. The variety $\cJ(X)$ is a compactification of the fibration over $\P^{5*}$, whose fibers are the intermediate Jacobians of the hyperplane sections of $X$, and it is deformation equivalent to O'Grady ten dimensional example OG10 \cite[Cor. 6.3]{LSV}. 

\smallskip

Let $X$ be a Pfaffian cubic fourfold. Recall from a lot of references (\cite{Beau,kuz_levico,Ha1}) that the K3 surface associated to a Pfaffian cubic is the "orthogonal"  section of the Grassmannian $G(2,6)$. Let $W$ be a 6 dimensional vector space. If $X$ is a pfaffian cubic defined by choosing a 6 dimensional vector space $V$ and an embedding $\P(V)\subset \P(\wedge^2 W^*)$, then we can consider the 9 dimensional annihilator $V^\perp$ in $\wedge^2(W)$. The projectivized $\P^8$ cuts out a degree 14 K3 $S$ from the $G(2,6)$ that naturally lives in $\P(\wedge^2 W)$, and $S$ is the K3 associated to $X$. 

\smallskip

An argument similar to those appearing in the preceding section will allow us to show that there exist several examples of $\cJ(X)$ with finite dimensional, Abelian Chow motive. First of all we need to recall the following result \cite[Theorem. 6.2]{LSV}.

\begin{thm}\label{birat}
If $X$ is a smooth Pfaffian cubic fourfold, then $\cJ(X)$ is birational to the
O'Grady moduli space $\mathcal{M}_{2,0,4}(S)$ parameterizing rank-2 semi-stable sheaves on $S$ with $c_1=0$ and $c_2=4$.
\end{thm}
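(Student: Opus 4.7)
The plan is to construct a birational map $\Phi: \cJ(X) \dashrightarrow \mathcal{M}_{2,0,4}(S)$ fiber by fiber over the LSV fibration $\pi: \cJ(X) \to (\P^5)^*$, and then to close the argument via a period/deformation-type comparison on the hyperkähler side.

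First I would set up the Pfaffian--K3 dictionary precisely. A Pfaffian cubic $X \subset \P(V)$ corresponds to a 6-dimensional subspace $V \hookrightarrow \wedge^2 W^*$ with $\dim W = 6$, and the associated degree-14 K3 surface is $S = G(2,W) \cap \P(V^\perp) \subset \P(\wedge^2 W)$. I would then check that the target moduli space has the correct dimension: for a rank-2 sheaf on $S$ with $c_1 = 0$, $c_2 = 4$, the Mukai vector is $v = (2, 0, -2)$ with Mukai self-pairing $v^2 = -2 \cdot 2 \cdot (-2) = 8$, so $\dim \mathcal{M}_{2,0,4}(S) = v^2 + 2 = 10 = \dim \cJ(X)$. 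Both spaces admit smooth projective irreducible holomorphic symplectic models deformation equivalent to O'Grady's OG10 (the LSV side by the main construction of \cite{LSV}; the Mukai side via O'Grady's symplectic desingularisation of $\mathcal{M}_{2,0,4}(S)$).

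The second step is the fiberwise construction. For generic $H \in (\P^5)^*$, the cubic threefold $Y = X \cap H$ inherits a Pfaffian representation, and its intermediate Jacobian is, by Clemens--Griffiths, the Prym variety $J(Y) \cong P(\widetilde{C}/C)$ of an \'etale double cover $\widetilde{C} \to C$ of a plane quintic $C$ attached to a chosen line on $Y$. The Pfaffian data for $Y$ produces, via the Mukai--Prym correspondence, a family of rank-2 torsion-free sheaves on $S$ with the prescribed Chern invariants, parametrized by $P(\widetilde{C}/C)$, which gives an embedding $J(Y) \hookrightarrow \mathcal{M}_{2,0,4}(S)$. Letting $H$ vary, these embeddings sweep out a 10-dimensional constructible subvariety of $\mathcal{M}_{2,0,4}(S)$, which by dimension reasons must be dense.

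The main obstacle is the globalization and the compactification. One must promote the fiberwise constructions to a single rational map $\Phi$ defined on a Zariski-open subset of $\cJ(X)$, which requires producing a relative universal sheaf over the open smooth locus of $\pi$, and then show $\Phi$ extends across the discriminant locus $\Delta \subset (\P^5)^*$ where hyperplane sections acquire nodes and $\pi$ has reducible fibers; matching this boundary behaviour with the boundary of $\mathcal{M}_{2,0,4}(S)$ is the technical heart of \cite{LSV}. A cleaner way to close the proof once the rational map is defined generically is to invoke the global Torelli theorem for hyperkähler manifolds of OG10 type: a birational identification between two such manifolds is equivalent to an isomorphism of their transcendental weight-2 Hodge structures. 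That isomorphism in turn follows from the Hodge-theoretic equivalence between $K_{14}^\perp \subset H^4(X,\Z)$ and $H^2_{prim}(S,\Z)(-1)$ furnished by the Pfaffian--K3 correspondence, which is already built into the setup.
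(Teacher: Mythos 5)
The paper itself gives no proof of this statement: it is quoted verbatim as \cite[Thm.\ 6.2]{LSV}, so there is nothing internal to compare your argument against, and I can only assess your sketch on its own terms. Your setup is fine --- the Pfaffian dictionary, the Mukai vector $(2,0,-2)$ with $v^2=8$, the dimension count, and the identification of both sides as OG10-type are all correct --- but the two steps you use to finish have real gaps. The ``Mukai--Prym correspondence'' you invoke fiberwise is not a result that does what you claim. The actual mechanism is that a hyperplane section $Y_H=X\cap H$ of the Pfaffian cubic corresponds to a degree-$14$ Fano threefold $Z_H=G(2,6)\cap\P(V_5^{\perp})$ which contains $S$ as a hyperplane section; one then uses that $J(Y_H)\cong J(Z_H)$ is birational to a moduli space of rank-$2$ bundles on $Z_H$ (Iliev--Markushevich, Markushevich--Tikhomirov, Druel) and that restricting those bundles to $S\subset Z_H$ lands in $\mathcal{M}_{2,0,4}(S)$. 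Without identifying this mechanism, the fiberwise embedding $J(Y_H)\hookrightarrow\mathcal{M}_{2,0,4}(S)$ is unsubstantiated, and your ``dense by dimension reasons'' step also requires checking that the images genuinely move with $H$ rather than all landing in a fixed $5$-fold.

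More seriously, the proposed Torelli shortcut is invalid. A Hodge isometry between the transcendental parts of $H^2$ of two hyperk\"ahler manifolds of the same deformation type does not imply that they are birational: the Verbitsky--Markman theorem requires a parallel-transport operator that is a Hodge isometry of the full $H^2$, and an abstract isometry of transcendental lattices need not extend to one (already for K3 surfaces, derived-equivalent non-isomorphic surfaces have Hodge-isometric transcendental lattices). In addition, the isometry you propose to feed in, $K_{14}^{\perp}\cong H^2_{prim}(S,\Z)(-1)$, lives on $H^4(X)$ rather than on $H^2$ of either ten-fold, and transporting it there is not free. So the argument cannot be closed this way; the geometric route, including the extension over the discriminant locus that you correctly identify as the technical heart of \cite{LSV}, is genuinely needed and cannot be bypassed.
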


Recall that O'Grady hyperKähler example OG10 is a birational desingularization of $\mathcal{M}_{2,0,4}(S)$. The main result of this section is the following.

\begin{thm}
There are infinitely many Pfaffian cubic fourfolds $X\in \mathcal{C}_{14}$ such that that the motive $h(\cJ(X))$ of the associated LSV 10-fold is finite dimensional and Abelian.
\end{thm}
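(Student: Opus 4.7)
The plan is to use the Hassett-divisor intersection machinery of Section 3 to produce an infinite supply of Pfaffian cubic fourfolds whose associated K3 surface has large Picard rank, and then to transfer finite dimensionality and Abelianity from this K3 surface to $\cJ(X)$ through Theorem \ref{birat}, a B\"ulles-type motivic result, and the birational invariance of Chow motives of hyperk\"ahler varieties.

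More precisely, I would first apply Theorem \ref{intersections} with $d_1=14$ and a suitable choice of admissible $d_2,\dots,d_{19}$ to obtain a $1$-dimensional algebraic family $\mathcal{B}\subset \mathcal{C}_{14}$ whose generic member $X$ admits an associated K3 surface $S_X$ of N\'eron-Severi rank $\geq 19$ (and of rank $20$ along the countable subset of points where $\mathcal{B}$ meets further Hassett divisors). Since the Pfaffian locus is Zariski open and dense in $\mathcal{C}_{14}$, it meets $\mathcal{B}$ in an open dense subset, so $\mathcal{B}$ contains infinitely many Pfaffian cubics $X$. By Theorem \ref{k3motive}, for each such $X$ the Chow motive $h(S_X)$ is finite dimensional and lies in $\sM^{Ab}_{rat}(\mathbf{C})$. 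Now Theorem \ref{birat} yields a birational equivalence between $\cJ(X)$ and the O'Grady moduli space $\mathcal{M}_{2,0,4}(S_X)$, whose symplectic resolution is a hyperk\"ahler manifold of OG10 deformation type. Since birational hyperk\"ahler manifolds have isomorphic Chow motives (Rie\ss's theorem), and since the theorem of B\"ulles expresses the Chow motive of a moduli space of stable sheaves on $S_X$ as an object of the tensor-thick subcategory $\langle h(S_X)\rangle \subset \sM_{rat}(\mathbf{C})$ generated by $h(S_X)$, it would follow that $h(\cJ(X))$ itself lies in $\langle h(S_X)\rangle$ and therefore, by the previous paragraph, is finite dimensional and Abelian. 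Letting $X$ vary in $\mathcal{B}$, or varying the admissible tuple $(d_2,\dots,d_{19})$, then produces the infinite family of examples claimed.

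The main obstacle is the motivic step: B\"ulles' theorem is formulated for moduli spaces associated to primitive Mukai vectors, while the relevant Mukai vector $(2,0,-2) = 2(1,0,-1)$ is non-primitive and $\mathcal{M}_{2,0,4}(S_X)$ is singular, only admitting a symplectic (Lehn-Sorger type) resolution of OG10 type. Promoting the $\langle h(S_X)\rangle$-containment to this resolution at the Chow level---rather than just at the level of Andr\'e motives, as in the recent work of Floccari-Fu-Zhang---is the technical core of the proof; it is made possible here by the strong Chow-level statement of Theorem \ref{k3motive} for $\rho(S_X)\geq 19$, which allows one to descend Floccari-Fu-Zhang's argument from $\sM_A(\mathbf{C})$ to $\sM_{rat}(\mathbf{C})$ for the specific K3 surfaces entering our construction.
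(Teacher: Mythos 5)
Your overall strategy---produce infinitely many Pfaffian cubics whose associated K3 surface has N\'eron--Severi rank at least $19$, then push finite dimensionality and Abelianity through Theorem \ref{birat} and the birational invariance of Chow motives of hyperk\"ahler varieties---is the same as the paper's, but both halves are carried out differently. For the supply of Pfaffian cubics the paper does not use the divisor-intersection machinery at all: it works on the K3 side, invoking Mukai's characterization of the degree-$14$ linear sections of $G(2,6)$ as exactly the Brill--Noether general polarized K3 surfaces, the fact that the non-Brill--Noether-general locus is a finite union of divisors in $\mathcal{G}_{14}$, and the density (in the complex topology) of K3 surfaces with N\'eron--Severi rank $\geq 19$; this immediately yields infinitely many Pfaffian cubics whose associated K3 has rank $\geq 19$. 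Your route through Theorem \ref{intersections} would give something stronger (a one-dimensional algebraic family of such cubics), but it hinges on the claim that the Pfaffian locus is Zariski open and dense in $\mathcal{C}_{14}$. That is not available: $\mathcal{C}_{14}$ is the \emph{closure} of the Pfaffian locus, which, being a dense constructible set, contains a dense open subset $U$, but openness of the Pfaffian locus itself is not established; and in any case you would still have to rule out that your curve $\mathcal{B}$ is entirely contained in the proper closed subset $\mathcal{C}_{14}\setminus U$. As written this step is a genuine gap, and the paper's Brill--Noether argument is precisely the device that avoids it.

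On the motivic side you correctly identify the real difficulty: the Mukai vector $2(1,0,-1)$ is non-primitive, $\mathcal{M}_{2,0,4}(S)$ is singular, and B\"ulles' theorem does not apply directly to it or to its symplectic resolution. But you then leave the resolution of this difficulty as an assertion (``descend the Floccari--Fu--Zhang argument from Andr\'e to Chow motives''), which is not a proof. In fact no new descent is needed: the paper simply cites \cite[Cor.~4.5 and 4.7]{FFZ}, which already establish at the level of \emph{Chow} motives that the OG10 resolution of $\mathcal{M}_{2,0,4}(S)$ has finite dimensional Abelian motive whenever $h(S)$ does; combined with Theorem \ref{birat} and the isomorphism of Chow motives of birational hyperk\"ahler manifolds, this closes the argument. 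So this part of your proposal is repaired by a citation rather than by the technical work you describe; without that citation, or an actual proof of the descent, the argument is incomplete.
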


\begin{proof}
Recall from \cite[Remark 4.8]{MR3968870} that the locus of Pfaffian cubics inside $\mathcal{C}_{14}$ is a constructible set. This means that it contains open subset, that we will denote by $U$.
%By the work of Mukai \cite{muk_newdev} we know that a degree 14 K3 obtained as a linear section of $G(2,6)$ if and only if is Brill-Noether general. Let $\mathcal{G}_{8}$ be the moduli space of genus 8 and degree 14 K3 surfaces. It is well known \cite{lazar,green_lazar} that the complement of the Brill-Noether general locus (namely, the Brill-Noether special locus) in the moduli space is the union of a finite number of divisors. 
On the other hand, we know \cite{Ha1} that there is a birational map

$$g:\mathcal{C}_{14} \dashrightarrow \mathcal{G}_8$$

sending a cubic fourfold onto its associated K3 surface. This means that there exists an open subset $A\subset \mathcal{C}_{14}$, which is isomorphically sent onto an open set $g(A)\subset \mathcal{G}_8$. Let us consider the intersection $A\cap U\subset \mathcal{C}_{14}$. The image $g(A\cap U)$ is still open in $\mathcal{G}_8$ and all K3 surfaces in $g(A\cap U)$ are associated to a smooth Pfaffian cubic fourfold.
%Let us denote now by $U_{BN}$ the intersection of $U$ with the Brill-Noether general locus. The subset $U_{BN}$ is open as well.
Now, it is well known that K3 surfaces with $rk(NS)\geq 19$ are dense (in the complex topology) in the 19-dimensional moduli space $\mathcal{G}_d$ of degree $d$ polarized K3 surfaces, for all $d$. This means that there are also K3 surfaces of this kind inside our open subset $g(A\cap U)$. It is also straightforward to see that there are infinitely many, by a density argument. In turn, this means that there exist Pfaffian cubic fourfolds, whose associated K3 has Néron-Severi group of rank at least 19. Hence, by \cite{Ped}, they have a finite dimensional and Abelian Chow motive.

\smallskip

The existence of the LSV HK 10fold, compactifying the intermediate Jacobian fibration, is verified in \cite{LSV} only for the generic cubic fourfold. More recently, Saccà \cite{sacca} has shown the existence of the HK compactification for all cubics, hence it is in this framework that we will place ourselves.
Now, by \cite[Cor. 4.5 and 4.7]{FFZ}, we know that the Chow motive of the OG10 hyperKähler associated to a K3 surface S is finite dimensional and Abelian whenever the $h(S)$ is. Then, if the cubic fourfold $X$ is Pfaffian, by Theorem \ref{birat} $\cJ(X)$ is birational to the OG10 associated to its K3 $S$. This implies that over the open subset $A \cap U$ there are infinitely many $\cJ(X)$ that, due to their birationality to the corresponding OG10, have finite dimensional and Abelian motive.
\end{proof}

%\begin{rem}
%In order to find an appropriate open subset in $\mathcal{G}_8$, we could have even observed that, since the Pfaffian locus is open \cite{MR3968870}, it contains an open subset $E$. If we call $\widetilde{E}$ the intersection of $E$ with the regular locus of $g$, then $\widetilde{E}$ is sent isomorphically onto an open subset $\widetilde{E}_g$ of $\mathcal{G}_8$ by the map $g$. The K3 surfaces in $\widetilde{E}_g$ are all associated to smooth Pfaffian cubic fourfolds, and the rest of the proof goes along the same lines.
%\end{rem}

\subsection{The twisted jacobian fibration}

\newcommand{\barj}{\overline{\cJ}(X)}

This section is the development of a suggestion of Giulia Saccà. In the last few years, Voisin \cite{twisted,voisinaj} introduced a twisted version of the intermediate jacobian fibration (see \cite[Sect. 3]{twisted} for details), parametrizing 1-cycles of degree 1 in the fibers of the cubic 3fold fibration. By the results of Voisin \cite{twisted} and Saccà \cite{sacca}, it is now known that, for any cubic fourfold $X$, there exist a hyperK\"ahler compactification of the twisted intermediate Jacobian fibration. The main result of this section shows that even in the twisted case, the motive of these HK compactifications can be often finite dimensional.

\begin{thm}
For $d$ not divisible by 4, 9, or any odd prime number $p \equiv 2 [3]$, there
exists an infinity of cubic fourfolds in $\C_d$ s.t. the twisted Jacobian fibration has a HK compactification with finite dimensional Chow motive.
\end{thm}

\begin{proof}
We observe that the condition on $d$ is nothing but the fact that the cubic fourfold $X$ has an associated K3 surface. Let $Ku(X)$ be the Kuznetsov component of the derived category of the cubic 4fold $X$. In the recent paper \cite{pertusietc}, the authors studied the moduli space $M(X)$ of semistable objects of Mukai vector $2\lambda_1+2\lambda_2$ in $Ku(X)$. Recall in fact from \cite{AT} that the algebraic Mukai lattice of $Ku(X)$ contains two distinguished classes $\lambda_i$, $i=1,2$, that span an $A_2$-lattice. With an appropriate choice of a stability condition on $Ku(X)$ (that we will not detail here, see \cite[Sect. 2 and 3]{pertusietc} for details), the authors manage to show that the moduli space $M(X)$ has a symplectic resolution $\widetilde{M}(X)$, which is a projective hyperK\"ahler manifold, deformation equivalent to O'Grady 10 dimensional examples. More importantly, they show that there exists a HK birational model $N(X)$ of $\widetilde{M}(X)$ that  provides a compactification of the twisted intermediate Jacobian fibration. Of course $N(X)$ and $\widetilde{M}(X)$ have isomorphic Chow motive, since they are birational.

\medskip

The same argument as in \cite[Section 5]{FFZ} (see also \cite[Theorem 1.3 and Remark 1.4]{FLV}) shows that $\widetilde{M}(X)$ has finite dimensional Chow motive whenever $h(X)$ is finite dimensional. This means once again that, if $X$ has an associated Singular K3 surface, then the motive $h(N(X))$ of the HK compactified twisted Jacobian fibration is finite dimensional. Since singular K3 surfaces are dense in the moduli space of genus $\frac{d+2}{2}$ K3 surfaces, by the same argument as in the untwisted case, this completes the proof.

\end{proof}

\section {K3 surfaces with Abelian Chow motive}
\label{sect 6}

In this section, we construct higher dimensional families of cubics with Abelian Chow motive starting from families of K3 surfaces with particular shapes. It is well known that among the examples of K3 surfaces with a motive of Abelian type there are Kummer surfaces and K3 surfaces with Picard rank $\ge 19$ (see \cite{Ped}). In this section we consider  two families $\sF$ and $\sG$ of K3 surfaces with Picard rank 16, constructed in \cite{GS}.\par

 In \cite{GS} the authors construct  families  of K3 surfaces $S$, with $\rho(S)=16$, admitting a symplectic action of  the group $ G =(\Z_2)^4$. The following lemma shows that  the transcendental motive $t_2(S)$, as defined in \cite{KMP}, only depends on the motive of the quotient surface $S/G$. Note that
 the motive $h(S/G)$ can be represented in $\sM_{rat}(\mathbf{C})$ see \cite[16.1.13]{Fu}.

\begin{lem}\label{symplectic} Let $S$ be a K3 surface over $\mathbf{C}$  with a finite group $G$ of symplectic 
automorphisms.  Let $Y$ be a  minimal desingularization of the quotient surface $S/G$. Then

$$t_2(S)\simeq t_2(Y)$$ 

in $\sM_{rat}(\mathbf{C})$.
\end{lem}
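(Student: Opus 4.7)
The plan is to compare the Chow motives of $S$, $S/G$, and $Y$ in two steps, and then to show that the symplectic $G$-action is trivial on $t_2(S)$.

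First, the finite group $G$ acts on $h(S)$ and the symmetrizing projector $p_G = \frac{1}{|G|}\sum_{g \in G}\Gamma_g$ realizes the quotient motivically: one has $h(S/G) \simeq (h(S),p_G) = h(S)^G$ in $\sM_{rat}(\mathbf{C})$, as already recalled just before the lemma. Since $G$ is symplectic, Nikulin's theorem ensures that each non-trivial element of $G$ has only isolated fixed points on $S$, so $S/G$ has at worst du Val (ADE) singularities; the exceptional locus of the minimal resolution $\sigma: Y \to S/G$ is therefore a disjoint union of ADE configurations of smooth $(-2)$-curves. Each such configuration is motivically a sum of Tate summands, so the standard motivic blow-up/resolution formula yields
\begin{equation*}
h(Y) \simeq h(S/G) \oplus \L^{\oplus N} \simeq h(S)^G \oplus \L^{\oplus N}
\end{equation*}
for some $N \geq 0$.

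Next I would extract the transcendental parts. Both $S$ and $Y$ are K3 surfaces and admit a Chow-K\"unneth decomposition of the Kahn-Murre-Pedrini type. I would choose the one on $h(S)$ to be $G$-equivariant, by averaging the algebraic projectors over $G$ and using a $G$-invariant $0$-cycle for $\pi_0$ and $\pi_4$, so that taking $G$-invariants commutes with the decomposition:
\begin{equation*}
h(S)^G \simeq \un \oplus h_2^{alg}(S)^G \oplus t_2(S)^G \oplus \L^2.
\end{equation*}
Matching this against the Chow-K\"unneth decomposition of $h(Y)$ and absorbing $\L^{\oplus N}$ into $h_2^{alg}(Y)$, one reads off
\begin{equation*}
t_2(Y) \simeq t_2(S)^G.
\end{equation*}

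It remains to prove $t_2(S)^G \simeq t_2(S)$, that is, that every $g \in G$ acts as the identity on $t_2(S)$ in $\sM_{rat}(\mathbf{C})$. Cohomologically this is clear: since $g$ is symplectic, $g^*\omega_S = \omega_S$, and by Nikulin's theorem $g$ acts trivially on the transcendental lattice $T(S)_{\mathbf{Q}}$, which is the Hodge realization of $t_2(S)$. To promote this to an equality of Chow correspondences I would invoke Bloch's conjecture for symplectic automorphisms of K3 surfaces (Voisin's theorem, together with its extensions to finite symplectic groups), which yields $g_* = \mathrm{id}$ on $CH_0(S)_{\mathbf{Q}}$; via the characterization of $t_2(S)$ as the summand of $h(S)$ carrying the transcendental part of the Chow group of $0$-cycles, this forces $g = \mathrm{id}$ on $t_2(S)$.

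The main obstacle lies precisely in this last step: upgrading the evident cohomological triviality of the symplectic $G$-action on $T(S)_{\mathbf{Q}}$ to a motivic equality on $t_2(S)$. All the other ingredients (the quotient and blow-up formulas, uniqueness of Chow-K\"unneth components, the Tate structure of ADE exceptional divisors) are essentially formal, so the depth of the argument is concentrated in the Bloch-type statement for the symplectic groups at hand, which must be invoked from the literature.
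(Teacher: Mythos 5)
Your argument is correct, and it rests on the same two non-formal inputs as the paper's proof: the Voisin--Huybrechts theorem that finite-order symplectic automorphisms act trivially on $CH_0(S)_{\Q}$ (the paper cites Huybrechts for the general finite case), and the Kahn--Murre--Pedrini-type fact that morphisms between $t_2$'s of surfaces are faithfully detected on Albanese kernels. The packaging, however, is genuinely different. You pass through $h(S/G)\simeq h(S)^G$, the Tate decomposition of the ADE exceptional locus, and a $G$-equivariant Chow--K\"unneth decomposition to reach $t_2(Y)\simeq t_2(S)^G$, and then kill the $G$-action on $t_2(S)$ by the Bloch-type theorem. The paper instead uses the generically finite rational map $S\dashrightarrow Y$, defined away from the finitely many fixed points, together with the birational invariance of $t_2$ to produce a split injection $\theta\colon t_2(S)\to t_2(Y)$ (citing \cite[Prop.~1]{Ped}); it writes $t_2(Y)=t_2(S)\oplus N$, deduces $A_i(N)=0$ for all $i$ from $A_0(S)^G=A_0(S)$, and concludes $N=0$ by the lemma of \cite{GG} that a summand of $t_2$ with vanishing Chow groups is zero. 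Your route is longer but more structural, and it avoids having to justify that the degree-$|G|$ rational map splits $t_2$ in the correct direction; the paper's route is shorter but delegates that splitting to the cited reference. Two small caveats on your side: the decomposition $h(Y)\simeq h(S/G)\oplus(\text{Tate summands})$ does not follow from the smooth blow-up formula alone, since $S/G$ is singular---it is a standard but separate fact for surfaces with quotient (du Val) singularities; and your last step should invoke the precise injectivity $\End(t_2(S))\hookrightarrow\End(T(S))$, with $T(S)$ the Albanese kernel, rather than the informal ``characterization'' of $t_2$ you appeal to.
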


 \begin {proof}  From the results in \cite{Huy2},  every symplectic automorphism 
$g \in G$ acts trivially on $A_0(S)$, so that $A_0(S)^G =A_0(S)$.
 Since $G$ is symplectic there are a finite number
 $\{P_1,\cdots,P_k\}$ of isolated fixed points for $G$ on $S$. Let $Y$
 be a minimal desingularization of the quotient surface $S/G$. The
 maps $f : S \to S/G$ and $g : Y \to S/G$ yield a rational map $S\dashrightarrow Y$
 which is defined outside $\{P_1,\cdots,P_k\}$. Since  $t_2(-)$ is a birational invariant for smooth projective surfaces we 
 get a map $\theta : t_2(S) \to t_2(Y)$ such that $\theta$ is the map onto a direct summand, see \cite[Proposition 1]{Ped}. Hence we can write
 
$$t_2(Y)=t_2(S)\oplus N.$$

Since $A_i(t_2(S))=0$, for $ i \ne 0$, and $A_0(S)^G =A_0(S)$, we get $A_i(N)=0$, for all $ i$. From \cite[Lemma 1]{GG} we get  
$N=0$, hence  $t_2(S) =t_2(S)^G \simeq t_2(Y)$.\par
\end{proof} 
 
\subsection{Intersection of three quadrics}\label{sezquad}

\noindent The moduli space of  K3 surfaces with Picard number 16, admitting a symplectic action of the group $G =(\Z_2)^4$, has a countable number of connected components of dimension 4. The generic element of one of these connected components, that we will denote by $\sF$, can be realized by considering a complete intersection $S$ of three quadrics  $Q_1,Q_2,Q_3$ in $\P^5$, see \cite[10.2]{GS}, whose equations are

\begin{equation}\label{quadriche}
\sum_{0\le i\le 5} a_ix^2_i=0  ;   \sum_{0\le i\le 5} b_ix^2_i=0  \ ; \ \sum_{0\le i\le 5} c_ix^2_i=0,
\end{equation}

with complex parameters $a_i, b_i, c_i  $  and  $i=0,\cdots,5$. The group $G$  is realized as the
transformations of $\P^5$ changing an even number of signs in the coordinates  $\{x_0: x_1: x_2: x_3 : x_4 : x_5\}$. Let us denote by $\sF$ the family of such K3 surfaces.
The dimension of the moduli space of these K3 surfaces is 4, see \cite[10.2]{GS}.% The involution on $\P^5$

%$$\sigma: (x_0: x_1: x_2: x_3: x_4 :x_5) \to (x_0: x_1: x_2: - x_3:  -x_4 : - x_5)$$

%induces an involution on $S$ that has no fixed points and commutes with the elements of $G$. Therefore $\sigma$  is an  Enriques involution and Lemma \ref{involution} applies, hence $t_2(S)=t_2(Y)$, with $Y$ a desingularization of $S/G$. An important feature of these surfaces is described in [Lat, Thm.3.1].

By Lemma \ref{symplectic} we get  $t_2(S)=t_2(Y)$, with $Y$ a desingularization of $S/G$. An important feature of these surfaces is described in \cite[Theorem 3.1]{Lat3}.

\begin{thm}\label{Lato3} A K3 surface belonging to the family $\sF$ has a motive of Abelian type.
\end{thm}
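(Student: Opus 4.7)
The plan is a two-step reduction: first descend to the symplectic quotient via Lemma \ref{symplectic}, then use the explicit geometry of $\sF$ to link that quotient to an Abelian surface.

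First, I would apply Lemma \ref{symplectic} to the symplectic action of $G=(\Z_2)^4$ on $S$: this yields $t_2(S)\simeq t_2(Y)$, where $Y$ is the minimal desingularization of $S/G$. Combined with the Chow--K\"unneth decomposition
$$h(S)=\mathbf{1}\oplus \mathbf{L}^{\rho(S)}\oplus t_2(S)\oplus \mathbf{L}^2,$$
and with the fact that $\mathbf{1}$ and $\mathbf{L}$ lie trivially in $\M^{Ab}_{rat}(\mathbf{C})$, the problem reduces to showing $h(Y)\in\M^{Ab}_{rat}(\mathbf{C})$.

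Second, I would analyze $Y$ using the explicit data \eqref{quadriche}. By classical results of Nikulin--Mukai, the symplectic $(\Z_2)^4$-action on a K3 has exactly $15$ isolated fixed points, each producing an exceptional $(-2)$-curve on $Y$; combined with Mukai's description of the coinvariant lattice (of rank $14$), an easy lattice count gives $\rho(S)^G=2$ and hence $\rho(Y)=17$, so $\rank T(Y)=5$. Since $\rho(Y)=17<19$, Theorem \ref{k3motive} cannot be applied directly. However, the transcendental rank $5$ is precisely that of a Kummer surface $\mathrm{Km}(A)$ attached to an Abelian surface $A$ with $\rho(A)=1$. I would then show that $Y$ admits a Shioda--Inose structure by verifying Morrison's criterion (a primitive embedding $T(Y)\hookrightarrow U^{\oplus 3}$) for the explicit transcendental lattice arising from \eqref{quadriche}. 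Alternatively, one could exhibit $A$ directly as an isogeny factor of the $4$-dimensional Prym variety attached to $S$ mentioned in the Introduction, producing an algebraic correspondence between $h(Y)$ and $h(A)$.

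Once such a link is in place, $t_2(Y)$ coincides (up to Tate twist) with a transcendental summand of $h(A)$, which is Abelian; therefore $h(Y)\in\M^{Ab}_{rat}(\mathbf{C})$, whence $t_2(S)\simeq t_2(Y)\in\M^{Ab}_{rat}(\mathbf{C})$, and finally $h(S)\in\M^{Ab}_{rat}(\mathbf{C})$. The main obstacle is the identification step: either verifying Morrison's Shioda--Inose criterion for the specific $T(Y)$ coming from the diagonal quadrics, or carrying out the Kuga--Satake construction explicitly to isolate the correct $2$-dimensional isogeny factor $A$ inside the $4$-dimensional Prym. Both routes rely crucially on the very special diagonal form of the defining quadrics \eqref{quadriche}.
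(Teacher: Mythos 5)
Your first step---reducing via Lemma \ref{symplectic} to the minimal resolution $Y$ of $S/G$ and to the summand $t_2(Y)$---is exactly how the paper sets things up. The second step, however, rests on an incorrect lattice count and therefore cannot work. The coinvariant lattice of a symplectic $(\Z/2)^4$-action has rank $15$, not $14$, and the generic member of $\sF$ has $\rho(S)=\rho(Y)=16$ (consistent with $\sF$ being a $4$-dimensional family, and with the paper's description of $NS(Y)\otimes\Q$ as spanned by the $15$ exceptional curves together with the pullback of a line). Hence $\operatorname{rank}T(Y)=6$, not $5$. A Shioda--Inose structure in Morrison's sense forces $T(Y)$ to be Hodge-isometric to the transcendental lattice of an abelian surface, which has rank at most $5$ (equivalently $\rho\geq 17$); a rank-$6$ lattice of signature $(2,4)$ cannot embed primitively into $U^{\oplus 3}$. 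So the Kummer/Shioda--Inose route is closed for the generic member of $\sF$, and for the same rank reason no abelian \emph{surface} isogeny factor of the Prym can carry $T(Y)$ either.

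What the paper actually does (the theorem itself is quoted from Laterveer, and the refinement is Prop.~\ref{Prym}) is keep the full $4$-dimensional Prym variety $P$: Paranjape's theorem provides an algebraic cycle $\Gamma\in A^2(P\times Y)$ inducing an inclusion of transcendental lattices $H^2_{tr}(Y,\Q)\hookrightarrow H^2_{tr}(P,\Q)$, whence a map of motives $h_2(P)\to t_2(Y)$ that is split surjective on cohomology; this is then promoted to a split surjection in $\sM_{rat}(\mathbf{C})$ using finite dimensionality and the conservativity of the passage to the semisimple quotient by numerical equivalence. Note that this last lifting step is also absent from your sketch: even with a correct correspondence in hand, going from a cohomological splitting to a splitting of Chow motives requires an argument of exactly this kind.
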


More precisely one can show, using the results in \cite{Para}, that the motive $h(S) $ belongs to the subcategory of 
$\mathcal{M}^{ab}_{rat}$ generated by the motive of a Prym variety associated to the surface $Y$. By \cite[10.2]{GS} the quotient $S/G$ is a double cover of $\P^2$ branched at six lines $l_i$ meeting in 15 points.  The vector space $NS(Y)\otimes \Q$  of the desingularization $Y$ of $S/G$  is generated by the 15 classes $E_{i,j}$ of the 15 exceptional curves over the intersection points $l_i\cap l_j$ and by the class $h$ of the inverse image of a general line in $\P^2$. The motives of the surfaces $S$ and $Y$  have a Chow-K\"unneth decomposition as in \cite[7.2.2]{KMP} 

\begin{eqnarray}
h(S) = \un \oplus \L^{\oplus \rho(S)} \oplus t_2(S) \oplus \L^2 ;\\  
h(Y)= \un \oplus \L^{\oplus \rho(Y)}\oplus t_2(Y) \oplus \L^2;
\end{eqnarray} 

\noindent where $\rho(S)=\rho(Y)=16$ and $t_2(Y)=t_2(S)$, by \ref{symplectic}. Therefore $h(Y)=h(S)$.\par
\noindent  By the results in \cite{Para} there exists a surface $W$ which is a desingularization of the quotient $(C \times C)/F$ with  $C$ a curve and  $F$ a finite group such that $Y =W/ i$, where  $i$  is a symplectic involution. The curve $C$ has genus 5 and has an automorphism $h$  of order 4 such that the quotient  $C/h$ is an elliptic curve $E$. The finite group $F$ acting on $C \times C$ is generated by the automorphism $(h,h^{-1})$ and the involution $(c_1,c_2) \to (c_2,c_1)$. Let $P$ be the connected component of the identity in the kernel of the natural homomorphism $\Jac \ C  \to \Pic E$.  The Prym variety $P$ is an Abelian variety of dimension 4  and the Kuga-Satake variety $K(Y)$ is a sum of copies of $P$.

\begin{prop}\label{Prym} The motive $h(S)$ belongs to the subcategory of $\sM^{Ab}_{rat}$ generated by the motive h(P).\end{prop}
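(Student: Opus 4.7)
The plan is to trace $h(S)$ through a sequence of motivic reductions down to $h(P)$, using the tower of correspondences assembled above. Thanks to the Chow--K\"unneth decomposition $h(S) = \un \oplus \L^{\oplus 16} \oplus t_2(S) \oplus \L^2$, it suffices to place the transcendental summand $t_2(S)$ inside the subcategory $\langle h(P)\rangle$, since $\un = h^0(P)$ and every $\L^j$ occurs as a direct summand of $h^{2j}(P)$ via a choice of ample class (recall $\dim P = 4$).

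By Lemma \ref{symplectic} we identify $t_2(S) \simeq t_2(Y)$. The identification $Y = W/i$ yields $h(Y)$ as the image of the projector $(1+\Gamma_i)/2$ on $h(W)$, and hence exhibits $t_2(Y)$ as a direct summand of $t_2(W)$. Since $W$ is a desingularization of $(C \times C)/F$, Manin's formulas for blowups together with the passage to the $F$-invariant part give that $h(W)$ is a direct summand of $h(C)\otimes h(C)$ modulo a finite sum of Tate twists. Decomposing $h(C) = \un \oplus h^1(C) \oplus \L$, the transcendental factor $t_2(W)$ is then a summand of the $F$-invariant part of $h^1(C)\otimes h^1(C)$.

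Next I would invoke the Prym decomposition. The natural isogeny $\Jac C \sim P \times E$ and Poincar\'e reducibility for motives of Abelian varieties give
$$h^1(C) \;\simeq\; h^1(\Jac C) \;\simeq\; h^1(P) \oplus h^1(E),$$
so $h^1(C)\otimes h^1(C)$ splits into the four summands built from $h^1(P)$ and $h^1(E)$. The crucial step is to argue that, under the joint action of $F = \langle (h,h^{-1}), \textrm{swap}\rangle$ and the symplectic involution $i$, the piece of $h^1(C)^{\otimes 2}$ that produces the transcendental motive $t_2(Y)$ lies in $h^1(P)\otimes h^1(P)$. This is reasonable because the $h$-invariant part of $h^1(C)$ is exactly $h^1(E)$ (the automorphism $h$ of $C$ descends to the identity on $E = C/h$), so any summand of $h^1(C)^{\otimes 2}$ involving an $h^1(E)$-factor is forced, after taking $F$-invariants and passing to the quotient, to produce classes detectable on $NS(Y)\otimes \Q$, which by \cite{GS} is spanned by the $15$ exceptional curves over the nodes of $S/G$ and the pull-back of a line. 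These are algebraic, hence orthogonal to $t_2(Y)$. The remaining contribution is therefore confined to $h^1(P)\otimes h^1(P) \subset h(P)^{\otimes 2}$, which lies in $\langle h(P)\rangle$.

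The principal obstacle is the last verification, namely that the $h^1(E)$-contributions to the $F$-invariant part of $h^1(C)^{\otimes 2}$ are absorbed entirely by the algebraic part $\un \oplus \L^{\oplus 16} \oplus \L^2$ of $h(Y)$, leaving the transcendental summand inside $h^1(P)\otimes h^1(P)$. This requires a careful bookkeeping of the action of $(h,h^{-1})$, the swap, and of the lift of $i$ to $W$ on the four summands of the Prym decomposition; alternatively one can try to short-circuit the analysis by upgrading the Kuga--Satake correspondence $K(Y)\sim P^{\oplus n}$ to a correspondence in $\sM_{rat}^{Ab}(\mathbf{C})$, using the results of \cite{Para} that already produce algebraic cycles realizing the Hodge-theoretic Prym description.
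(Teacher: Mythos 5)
Your main route stalls exactly where you say it does, and that step is a genuine gap, not a routine verification. After reducing to showing that $t_2(Y)$ sits inside the $F$-invariant part of $h^1(C)\otimes h^1(C)$ and splitting $h^1(C)\simeq h^1(P)\oplus h^1(E)$, you assert that every summand involving an $h^1(E)$-factor contributes only classes visible in $NS(Y)\otimes \Q$. That is not automatic: $\Sym^2 h^1(E)$ and $h^1(P)\otimes h^1(E)$ carry weight-two Hodge structures that are transcendental in general, so killing them requires a genuine computation with the actions of $(h,h^{-1})$, the swap, and the lift of the involution $i$ to $W$ --- which you do not carry out. As written, the argument does not establish that $t_2(Y)$ is confined to $h^1(P)\otimes h^1(P)$, so the proof is incomplete at its decisive point.

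The paper avoids this bookkeeping entirely and instead does (a precise version of) the ``short-circuit'' you mention only in passing. It takes the algebraic cycle $\Gamma\in A^2(P\times Y)$ produced by Paranjape's Main Theorem, whose action $\Gamma^*:H^2(Y,\Q)\to H^2(P,\Q)$ embeds the transcendental lattice, and composes with $h_2(P)\subset h(P)$ and $h(Y)\to t_2(Y)$ to get a morphism $\gamma: h_2(P)\to t_2(Y)$ that is split surjective \emph{on cohomology}. The missing ingredient in your sketch is the mechanism that promotes this to a splitting in $\sM_{rat}(\mathbf{C})$: both motives are finite dimensional, and by Andr\'e--Kahn the functor to the semisimple quotient by the numerical ideal is conservative and reflects split epimorphisms, so $\gamma$ is split surjective already in $\sM_{rat}(\mathbf{C})$ and $t_2(S)\simeq t_2(Y)$ is a direct summand of $h_2(P)$. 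Without some such descent principle, an algebraic correspondence inducing a cohomological inclusion does not by itself yield a direct summand of Chow motives; this is the second, and equally essential, idea absent from your proposal. (Your preliminary reductions --- handling $\un$ and the $\L^j$ via $h(P)$, the identification $t_2(S)\simeq t_2(Y)$, and realizing $t_2(Y)$ as a summand of the invariant part of $h^1(C)^{\otimes 2}$ --- are fine and consistent with the paper's setup.)
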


\begin{proof}  By the Main Theorem in \cite{Para} there exists an algebraic cycle $\Gamma \in A^2(P \times Y)$ such that the associated map $\Gamma^*: H^2(Y, \Q) \to H^2(P,\Q)$ induces an inclusion between the lattices  of transcendental cycles. Therefore the vector space $H^2_{tr}(Y,\Q) $ is a direct summand of $H^2_{tr}(P,\Q)$. Since $P$ is an Abelian variety its Chow motive $h(P)$ has a C-K decomposition
$$h(P)=\sum_{0 \le i \le 8} h_i(P).$$
\noindent The correspondence $\Gamma \in A_4(P \times Y)$ gives a map $ h(P) \to h(Y)$, in the covariant category $\sM_{rat}(\mathbf{C})$. By composing with the inclusion $h_2(P) \subset h(P)$ and the projection $h(Y) \to t_2(Y)$ we get a map of motives

\begin{equation} \label {split} \gamma  : h_2(P) \to t_2(Y) \end{equation}

\noindent such that the corresponding map at the level of cohomology is split surjective. The motives $h(P)$ and $t_2(Y)=t_2(S)$ lie in the subcategory $\sC\subset \sM_{rat}(\mathbf{C})$ generated by finite dimensional motives. Let $\sN$ be the largest tensor ideal in $\sC$ such that the quotient category $\sC/\sN$ is semi simple. The ideal $\sN$ corresponds to numerical equivalence of cycles, the functor $\sC\to \sC/\sN$ is conservative and reflects split epimorphisms, see \cite[1.4.4 and 8.2.4]{AK}. Therefore 
the map $\gamma$ is split surjective in $\sM_{rat}(\mathbf{C})$. This proves that $t_2(Y)=t_2(S)$ is a direct summand of $h_2(P)$, hence  the motive $h(S)=h(Y)$ lies in the subcategory of $\sM_{rat}^{Ab}$
generated by the motive of the Prym variety $P$.
\end{proof}

\subsection{Degree six surfaces in $\P^4$.} A further example of  a family $\sG$  of K3 surfaces  with a motive of Abelian type is given by degree 6 surfaces $S$ in $\P^4$ with 15 ordinary nodes. The desingularization of these surfaces has Picard rank 16 as well.
By a result in \cite[8.2]{GS} these surfaces are the quotient of a K3 surface $X$  by the action of a symplectic group $G \simeq (\Z_2)^4$ of automorphisms. We have a diagram

\begin{equation} \label{diagram} \CD  \tilde X@>>>X\\
@V{\pi}VV   @V{f}VV  \\
Y@>{g}>> S \\ \endCD \end{equation}

\noindent where $\tilde X$ is the blow-up of the fixed points under the action of $G$ and $Y$ contains 15  rational curves coming from the resolution of the singularities of $S=X/G$. The map  $\pi$ is $16:1$ outside the branch locus.  By Lemma \ref{symplectic}, the maps in \ref{diagram} induce an isomorphism of motives $h(X) \simeq h(Y)$.

The  results  of Paranjape have been recently extended in \cite[Cor. 6.16]{ILP}. 

\begin{thm}\label{15 nodes} Let $S$ be  general K3 surface of degree 6 with 15 ordinary nodes, i.e singularities of type $A_1$. Then the motive of a desingularization of  $S$ is finite dimensional and of Abelian type.\end{thm}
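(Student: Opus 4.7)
The plan is to reduce, via the symplectic quotient construction already displayed in diagram~\ref{diagram}, the statement to showing that the transcendental motive of the associated K3 surface $X$ is both finite dimensional and Abelian, and then to realize $t_2(X)$ as a direct summand of the motive of an Abelian variety, in complete analogy with Proposition~\ref{Prym}.

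First I would invoke the description from \cite[8.2]{GS}: a general such $S$ arises as $S = X/G$, with $X$ a smooth K3 surface and $G \simeq (\mathbf{Z}/2)^4$ acting symplectically, so that the 15 nodes of $S$ correspond to the isolated fixed points of the $G$-action. Applying Lemma~\ref{symplectic} to $X$ and $G$ yields $t_2(X) \simeq t_2(Y)$ in $\sM_{rat}(\mathbf{C})$, where $Y$ is the minimal desingularization appearing in~\ref{diagram}. Combined with the Chow--K\"unneth decomposition
\begin{equation*}
h(Y) = \un \oplus \L^{\oplus \rho(Y)} \oplus t_2(Y) \oplus \L^2
\end{equation*}
(with $\rho(Y) = 16$, the Lefschetz summands being trivially of Abelian type and finite dimensional), this reduces the theorem to proving that $t_2(X)$ lies in $\sM^{Ab}_{rat}(\mathbf{C})$ and is finite dimensional.

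Next, mirroring the strategy of Proposition~\ref{Prym}, I would search for an Abelian variety $A$ together with an algebraic cycle $\Gamma \in A^*(A \times X)$ whose cohomological action realizes an inclusion of the transcendental lattice $H^2_{tr}(X,\mathbf{Q})$ as a direct summand of $H^2(A,\mathbf{Q})$. Such a $\Gamma$ produces a map of motives $h(A) \to h(X)$ in $\sM_{rat}(\mathbf{C})$; composing with the inclusion of the appropriate Chow--K\"unneth piece $h_i(A)$ and with the projection $h(X) \to t_2(X)$ gives a morphism $\gamma : h_i(A) \to t_2(X)$ that is split surjective in cohomology. Since both motives already lie in the subcategory $\sC$ generated by finite dimensional motives, the conservative, split-epimorphism-reflecting functor $\sC \to \sC/\sN$ of \cite{AK} would lift $\gamma$ to a split surjection in $\sM_{rat}(\mathbf{C})$, exhibiting $t_2(X)$ as a direct summand of $h(A)$ and thereby placing $h(Y) \simeq h(X)$ inside $\sM^{Ab}_{rat}(\mathbf{C})$; finite dimensionality then follows from Shermenev's theorem for Abelian varieties and the stability of finite dimensionality under direct summands.

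The hard part will be the explicit construction of $A$ and of the correspondence $\Gamma$. In the family $\sF$ treated by Paranjape, $A$ was a four-dimensional Prym variety arising from a genus~5 curve with an order~4 automorphism, whose existence was forced by the geometry of the six branch lines covered by $S/G$ in $\P^2$. For the degree~6 nodal family $\sG$ one needs an analogous explicit geometric source for an auxiliary Abelian variety adapted to the symplectic $(\mathbf{Z}/2)^4$-action on $X$: for instance by exhibiting $X$ birationally as the quotient of a product of curves under a commuting finite group action, or by analyzing a suitable pencil of curves on $X$ whose relative Jacobian, after isogeny decomposition, produces $A$. Once such geometric input is in place, the remaining arguments (assembly of the correspondence, semi-simplicity modulo $\sN$) parallel those already performed for the family $\sF$.
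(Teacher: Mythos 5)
Your reduction steps are consistent with the framework of the paper: realizing the general $15$-nodal sextic $S$ as $X/G$ with $G\simeq(\Z/2)^4$ symplectic via \cite[8.2]{GS}, invoking Lemma~\ref{symplectic} and diagram~\ref{diagram} to get $t_2(X)\simeq t_2(Y)$, and observing that the Chow--K\"unneth decomposition reduces everything to the transcendental motive. However, there is a genuine gap, and you have in fact flagged it yourself: the entire mathematical content of the theorem lies in the step you defer, namely the explicit construction of an Abelian variety $A$ and of an algebraic correspondence $\Gamma\in A^*(A\times X)$ realizing $H^2_{tr}$ as a summand of $H^2(A,\Q)$. For the family $\sF$ this was supplied by Paranjape's work \cite{Para} (the genus $5$ curve with its order $4$ automorphism and the resulting $4$-dimensional Prym), and Proposition~\ref{Prym} is only the formal part of the argument (conservativity of $\sC\to\sC/\sN$, reflection of split epimorphisms). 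For the degree $6$ nodal family $\sG$ no such geometric source is exhibited in your proposal; ``searching for'' $A$ and $\Gamma$ is precisely where the proof must happen, and without it nothing is proved.

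For comparison: the paper does not prove Theorem~\ref{15 nodes} internally at all. It imports it wholesale from \cite[Cor.~6.16]{ILP}, which is described as an extension of Paranjape's results to this family; that reference is where the analogue of the correspondence you are missing is actually constructed. So your sketch correctly reproduces the reduction scaffolding that surrounds the theorem elsewhere in the paper (Lemma~\ref{symplectic}, the argument of Proposition~\ref{Prym}), but it stops exactly at the point where the cited external work begins, and therefore does not constitute a proof of the statement.
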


In particular every surface in the family $\sG$ has a motive of Abelian type.

 \section{Fano fourfolds with associated K3 surfaces in $\sF$ or $\sG$}
\label{sect. 7}
%Not many families of cubic fourfolds with finite dimensional motive or of Abelian type are known in the literature (see for example \cite{Lat1, Lat2}).

%\begin{thm}Every Hassett divisor $\sC_d$ contains a countable infinity of one dimensional families of smooth cubic fourfolds, whose Chow motive is  finite dimensional and of Abelian type.\end{thm}
In this section we use finite dimensionality of  K3 surfaces belonging to the families  $\sF$ and $\sG$ to construct Fano fourfolds either with a finite dimensional motive, or that contain a significant finite dimensional submotive (actually the motive of a K3 surface). This is similar to the approach of \it categorical representability \rm followed in \cite{BBrepre}. In Theorem \ref{3quadrics} we consider a family of special Fano fourfolds  fibrated in quadric surfaces. On the other hand, in Proposition \ref{isolated} we study singular cubic fourfolds associated to K3 surfaces belonging to $\sG$ . In the second case the motive of the singular cubic  4-fold  is Schur-finite in Voevodsky's triangulated category of motives  $ {\bf DM}_{\Q}(\mathbf{C})$. 

\subsection{Fano fourfolds associated to the net of quadrics of an octic K3 surface.}

%\noindent Recall from Eq. \ref{CK} the Chow-K\"unneth decomposition from Eq. \ref{CK}  of the Chow motive of a  smooth cubic fourfold.
%\begin{equation}
%h(X)= \un \oplus \L \oplus (\L^2)^{\oplus \rho_2}%\oplus t(X) \oplus \L^3 \oplus \L^4,
%\end{equation}

%\noindent where $\rho_2$ is the rank of $A^2(X)$ and all the summands of $h(X)$, but possibly $t(X)$, are finite dimensional and of Abelian type. The motive $t(X)$ is the transcendental motive of $X$ (see \cite{BP20} for more details).

%\medskip

Let  $Q_1,\ Q_2$ and $Q_3$ the three quadrics defined in Eq. \ref{quadriche}. They generate a net of quadrics, whose total space is a quadric fibration $\sigma: \mathcal{Q}\to \P^2$ of relative dimension 4, contained in $\P^2\times \P^5$. A simple calculation shows that for generic choices of $a_i, b_i$ and $c_i$ inside Equation \ref{quadriche}, the generic fiber of $\sigma$ is smooth and the singular fibers have an isolated singularity. The net degenerates along a plane sextic, and recall that the degree 8 complete intersection of the $Q_i$ is (generically) a smooth K3 surface $S$, that is the base locus of the net.

We are now going to perform the hyperbolic reduction (see \cite[Sect. 1]{ABB} for details) of the quadric bundle with respect to the constant section given by any one of the points of $S$. The result will be a second quadric fibration, but this time the fibers will be surfaces, and the total space a Fano fourfold.

We observe that $\mathcal{Q}$ is given by a $6\times 6$ symmetric matrix $M_{\mathcal{Q}}$ of linear forms, that define a map $\mathcal{O}_{\P^2}^{\oplus 6} \to \mathcal{O}_{\P^2}(1)^{\oplus 6}$. This means that $\mathcal{Q}$ can be seen as the zero locus of $(\mathcal{O}_{\P^2}^{\oplus 6}, q, \mathcal{O}_{\P^2}(1))$, where by this we mean a line bundle valued quadratic form $q$ on $\mathcal{O}_{\P^2}^{\oplus 6}$, with values in $\mathcal{O}_{\P^2}(1)$. For simplicity, in this section we will denote $E:=\mathcal{O}_{\P^2}^{\oplus 6}$. The K3 surface has points over $\mathbb{C}$ hence we can chose the constant section given by one of these over $\P^2$. This gives an inclusion of an isotropic line sub-bundle $N:=\mathcal{O}_{\P^2}\subset E$. The hyperbolic reduction of $\mathcal{Q}$ naturally lives in the projectivized bundle $N^\perp/N$ which is defined by the following two short exact sequences.

\begin{eqnarray}
0\to  N^\perp \to  E \to   Hom( \mathcal{O}_{\P^2},\mathcal{O}_{\P^2}(1)\to  0; \\
0 \to  N \to  N^\perp \to  N^\perp/N \to  0.
\end{eqnarray}

An easy Chern class computation based on the above SES shows that $c_1(N^\perp/N)= h$, where $h$ is the hyperplane class of $\P^2$. Let us denote by $\sigma': \mathcal{Q}'\to \P^2$ the quadric surface bundle obtained by hyperbolic reduction of $\mathcal{Q}$ at the constant section given by projectivizing $N$.

\begin{lem}
The quadric surface fibration $\sigma ':\mathcal{Q}'\to \P^2$ is a smooth Fano variety.
\end{lem}

\begin{proof}
Hyperbolic reduction preserves the discriminant and the corank of the singular fibers, hence smoothness holds. Let us set $H_1=\sigma^{\prime*}h$ and $H_2=\mathcal{O}_{\P(N^\perp/N)}(1)_{|\mathcal{Q}'}$. In order to compute the anticanonical bundle, we can use the formula of Proposition 2.13 of \cite{alex} and obtain

$$-K_{\mathcal{Q}'}= \frac{12 -2 - 6}{2+2}H_1 + 2H_2.$$

Since $H_2$ is ample, one easily sees that $-K_{\mathcal{Q}'}$ is ample as well and $\mathcal{Q}'$ is Fano.
\end{proof}

%\begin{thm}\label{3quadrics}
%The Chow motive of the Fano fourfold $\mathcal{Q}'$ has a finite dimensional submotive $h(S)(1)\to h(\mathcal{Q}')$, that is the (twist of the) Chow motive of the degree 8 K3 surface $S:= \bigcap_{i=1}^3Q_i$.
%\end{thm}

\begin{thm}\label{3quadrics}
The Chow motive of the Fano fourfold $\mathcal{Q}'$ is finite dimensional, Abelian and contains the transcendental motive $t(S)(1)$.
\end{thm}

\begin{proof}
 Let $S$ be the K3 surface defined by the intersection of the 3 quadrics $Q_1,Q_2,Q_3$. The quadrics $Q_i$ define a linear system of dimension two $|Q|=\P^2$. This projective plane naturally contains the discriminant curve $D$ of degree 6, that parametrizes singular quadrics, and under our genericity hypotheses the sextic is smooth. The curve $D$ defines another smooth K3 surface $T$, which is %the minimal resolution of singularities of 
the double cover of $\P^2$ ramified in $D$.

 %The K3 surface $T$  is the moduli of sheaves  $\sE$ on $S$ with rank $r=2$, Euler characteristic $\chi=4$ and first Chern class $c_1(\sE)=H$, where $H$ is the primitive polarization of $S$ of degree 8, see \cite{MN}. It is a well known result due to Mukai and Orlov  saying that two K3 surfaces are derived equivalent if and only if one is a moduli space of stable  sheaves on the other, see \cite{Huy1}. 
% Therefore the category  $D^b(S)$ is equivalent to $D^b(T)$ . By \cite{Huy3} the two K3 surfaces $S$ and $T$ have isomorphic motives. Therefore the transcendental  motives $t_2(S)$ and $t_2(T)$ are isomorphic in $\sM_{rat}(\mathbf{C})$.\par
 
By the work of Kuznetsov \cite{kuzquad} on derived categories of quadric fibrations, we know that there exists a semi-orthogonal decomposition for the bounded derived category of coherent sheaves on $\mathcal{Q}'$:

\begin{equation}\label{dbquad}
D^{\mathbf{b}}(\mathcal{Q}') = \langle D^{\mathbf{b}}(T, \alpha), \sigma'^{*}(D^{\mathbf{b}}(\P^2))\otimes \mathcal{O}_{\mathcal{Q}'/\P^2}(1), \sigma'^{*}(D^{\mathbf{b}}(\P^2))\otimes \mathcal{O}_{\mathcal{Q}'/\P^2}(2) \rangle.
\end{equation}
 
Here $T$ is our K3 double cover of $\P^2$, and $\alpha\in Br[2](T)$ the (2-torsion) Brauer class on $T$ that comes from the Brauer-Severi variety on $T$ defined by the relative Hilbert scheme of lines inside the fibers of $\sigma':\mathcal{Q}'\to \P^2.$ Moreover, thanks to the work of Cald\u{a}r\u{a}ru \cite{calda}  , we know that we have a derived equivalence

\begin{equation}\label{calde}
D^{\mathbf{b}}(T, \alpha)\cong  D^{\mathbf{b}}(S).
\end{equation}

The fully faithful functor $D^{\mathbf{b}}(S) \to D^{\mathbf{b}}(\mathcal{Q}')$ is Fourier-Mukai. A straightforward application of Grothendieck-Riemann-Roch shows that this induces a correspondence
$\gamma \in A^3(S \times \mathcal{Q}')$ that gives a map of motives $h(S) (1) \to   h(\mathcal {Q'}) $.% and in particular an isomorphism $A^3(h(S)(1)) \stackrel{\sim}{\rightarrow} A^3(h(\mathcal{Q}'))_{hom}$ %This follows from the fact that the homologically trivial part of $S$ and $\mathcal{Q}'$ are concentrated in codimension two 
%and three respectively. For $S$, this follows from the reduced Chow-K\"unneth decomposition of $h(S)$ while for $\mathcal Q'$ it follows from 

Recall moreover that we have the decomposition, proved in  Vial  \cite[Cor. 4.4]{vialfib}, 

\begin{equation}\label{ckquad}
h(\mathcal{Q}')\cong \bigoplus_{i=0}^2 h(\P^2)(i) \oplus (Z,r,1),
\end{equation}

because $h(\mathcal{Q}')$ is a quadric bundle over $\P^2$. Here $Z$ is a surface and $r$ a projector.
Now we claim that $A_0(S)_{hom} \cong r_*A_0(Z)_{hom}$. In fact, the equivalence in Equation \ref{calde} implies
$A_0(S)_{hom} \cong A^\ast_{hom}(T)$, and $A^\ast_{hom}(T)$ is in turn isomorphic to $A^\ast_{hom}(\mathcal{Q}^\prime)$, just by taking $A^\ast_{hom}$ on both sides of Equation \ref{dbquad}.
On the other hand, we also note that $A^\ast_{hom}(\mathcal{Q}^\prime)=A^3_{hom}(\mathcal{Q}^\prime)$ is isomorphic to $r_\ast A_0(Z)_{hom}$ by taking $A^\ast_{hom}$ on both sides of Equation $\ref{ckquad}$.

% From the reduced  CK decompositions of the surfaces  $S$ and $Z$, that both have no odd cohomology, we get

%$$A^3(h(S)(1))=  A^2(t_2(S))=A_0(S)_{hom}   ;\ \  A^3((Z,r,1)_{hom} = A^2((Z,r)_{hom} =r_*(A_0(Z)_{hom} \subset  A_0(Z)_{hom}.$$ 

%\noindent Therefore 

%$$ A^3(\mathcal{Q}')_{hom}   =   A^3((Z,r,1) = A^2((Z,r) =r_*(A_0(Z)_{hom}).$$ 

 Since, by Equation \ref{ckquad}, the only possibly non-abelian, non finite dimensional, part of $h(\mathcal {Q'}) $ depends on the motive $(Z,r,1)$, we just need to prove that the map of motives $h(S) \to (Z,r,0)$ induced by $\gamma_*: h(S) (1) \to   h(\mathcal {Q'}) $ gives an isomorphism between the transcendental parts, since the algebraic parts are finite dimensional. The transcendental parts of $h(S)$ and $(Z,r,0)$ are respectively $t_2(S)$ and $r_*(t_2(Z))$, whose Chow groups $A_0(S)_{hom}$ and $r_*A_0(Z)_{hom}$ are isomorphic. Therefore they are isomorphic and the transcendental motive of $S$ is finite dimensional and abelian.
% that  gives an injection on Chow groups and hence displays $h(S) (1) $ as a submotive of   $ h(\mathcal {Q'}) $.
\end{proof}

%A standard Grothendieck-Riemann-Roch  argument (like in \cite[Sec. 2.3]{BP13}) shows that there is a non-graded equality of motives

%$$h(\mathcal{Q}')\cong h(S) \oplus \bigoplus_{i=1}^6 \mathbb{L}(j_i),$$

%where the $\mathbb{L}(j_i)$ are appropriate twists of the Lefschetz motive. But now, the Lefschetz motives are Kimura finite dimensional, and $h(S)$ is finite dimensional by hypothesis. Hence $h(\mathcal{Q}')$ is Kimura finite as well.

%$$h(S) \bigoplus_i \mathbb{L}(j_i) \cong h(Z)  \bigoplus_k \mathbb{L}(j_k),$$

%since the Chow groups on both sides are isomorphic. Here the $\mathbb{L}(j_i)$ and $\mathbb{L}(j_k)$ are copies of twisted Lefschetz motices. This in turn means that $h(Z)$ is finite dimensional if and oly if $h(S)$ is finite dimensional, which is true by hypotheses.

% Recall that $D^{\mathbf{b}}(\P^2)$ is generated by three exceptional objects, hence an easy Grothendieck-Riemann-Roch argument shows that, in the CK decomposition of $h(\mathcal{Q}')$, the only component which is neither the unit nor the Lefschetz motive (which are both trivially finite dimensional) is the transcendental motive $t_2(T)$. But this in turn is isomorphic to $t_2(S)$, which by hypothesis is finite dimensional and Abelian. This concludes the proof.

\medskip

%\begin{prop}
%The moduli space of cubic fourfolds defined as in (\ref{eqcubic}) is 13-dimensional.
%\end{prop}

%\begin{proof}
%As it is observed in \cite[10.2]{GS}, the group of automorphisms of $\P^5$ commuting with the automorphisms generating $G$ is given by the diagonal $6\times 6$-matrices, hence it has projective dimension 5. In order to construct an equation as in $(\ref{eqcubic})$ we need to choose 3 linear forms in a 6 dimensional space, and 3 quadrics in a 6 dimensional space. This gives twice the dimension of the Grassmannian $\mathbb{G}(3,6)$, that is 18. Hence, the dimension of the moduli space of such cubics is 18-5=13.
%The octic K3 surfaces from the family $\mathcal{F}$ are associated to these cubic fourfolds, in the sense of Hodge theory \cite[Sect. 5]{Ha1}. The moduli map that associates the K3 to a cubic fourfolds is injective, hence we conclude.
%\end{proof}

\subsection{Singular cubics and 15-nodal K3 surfaces.}\label{schurbirat}

The relation between smooth cubic fourfolds and polarized K3 surfaces, proved in \cite{Ha1}, has been recently extended to the case of fourfolds with isolated ADE singularities. A.K. Stegmann in 
\cite{Steg} constructs the moduli space of cubic fourfolds with a certain combination of isolated ADE singularities as a GIT quotient and compares it to the moduli space of certain quasi-polarized K3 surface of degree 6, proving that the two moduli spaces are isomorphic. Here we consider the motive of  a cubic fourfold  $X$  with isolated  singularities associated to  a K3 surfaces  of degree 6 with 15 nodes belonging to $\sG$. Since we are now working with singular varieties, we need to change the category of motives from $\sM_{rat}(\mathbf{C})$ to   Voevodsky's triangulated category of motives  ${\bf DM}_{\Q}(\mathbf{C})$.\par

Let $X\subset \P^5$ be a cubic fourfold with isolated singularities. Projection from a double point $p \in X$  gives a birational map $\pi_p : X \dashrightarrow \P^4$ which can be factored as

$$ \CD \tilde X_p@>{q_1}>>X   \ ;   \       \tilde X_p@>{q_2}>>\P^4 \endCD$$
where $q_1$ is the blow-up of the singular  point $p$ and $q_2$ is the blow-down of
the lines contained in $X$  passing through $p$.  These lines are parametrized by a normal surface $S_p$ of degree 6 in $\P^4$. The surface $S_p$ is a complete intersection of  a quadric $Q_p$ and a cubic $C_p$. The quadric $Q_p$ is completely determined  by  $S_p$ while the cubic  $C_p$ in $\P^4$ containing $S_p$ is uniquely determined modulo those cubics containing the quadric $Q_p$. Conversely, starting from a $(2,3)$-complete intersection in $\P^4$, the blow up of the sextic surface followed by the contraction of the strict transform of the quadric yields a singular cubic fourfold with a double point, which is the image of the quadric. The type of this singularity depends on the rank of the quadric.

\smallskip

The cubic fourfold $X$ and the complete intersection $S_p$ can both
have singularities, and still the birational transformation here above
holds true. One of the  main results in  \cite{Steg} is that there is a  natural correspondence between singularities on the cubic fourfold and on the sextic surface, including the type of singularities.\par
Suppose that the surface $S_p$ has only isolated singularities. Since the singularities of $S_p$ are simple  the minimal resolution of $S_p$ is a K3 surface. If $S_p$ is singular at a point $y$ then either $y=\pi_p (p')$, with $p' \ne p$ a singular point of $X$,  or  $y$ is singular for the quadric $Q_p$. In this second case the cubic $C_p$ cannot be singular at $y$,
because otherwise $X$ would be singular along the line $\overline {p y}$ while $X$ has only isolated singularities. Therefore the singularities of $X-\{p\}$ are in 1-1 correspondence with the singularities of $S_p$ not contained in $\Sing Q_p$. Let $E_p$ be the exceptional divisor of the the blow-up $\tilde X_p\to X$ at $p$. Then $E_p$ is isomorphic to $Q_p$ and the singularities of $\tilde X_p$ on $E_p$ correspond to the singularities of $S_p$ which are contained in $\Sing Q_p$. If  $X$ has only a single $A_1$ singularity  $p$ then the  surface  $S_p$ is a smooth K3 surface, see \cite[5.1 and 5.2]{Steg}. Note that, due to a result of Varchenko (see \cite[Theorem on the Upper Bound, p. 2781]{Varch}) the maximal number of isolated singularities
which can occur on a cubic fourfold is 15. 
\medskip

 \noindent  Let ${\bf DM}_{\Q}(\mathbf{C})$ be Voevodsky's triangulated category of motives  (with $\Q$-coefficients). There is a fully faithful embedding

\begin{equation}\label{functor} F: \sM_{rat}(\mathbf{C}) \to {\bf DM}_{\Q}(\mathbf{C}).
\end{equation}

\noindent For  every complex  variety $V$ one can define a motive $M(V) \in {\bf DM}_{\Q}(\mathbf{C})$ and every blow-up diagram 

\begin{equation}
\CD E@>{\bar j}>>Y=\Bl_Z \\
@V{\bar \sigma}VV    @V{\sigma}VV \\
Z@>{j}>> X  \endCD \end{equation}

\noindent induces a distinguished triangle in $ {\bf DM}_{\Q}(\mathbf{C})$, see \cite[(4.1.3)]{Voev}

\begin{equation}\label{triangle}\CD  M(E)@>{ (\bar \sigma)_*+\bar j_*}>> M(Z) \oplus M(Y)@>{j_*-  \sigma_*}>>M(X)@>>>M(E)[1] \endCD \end{equation}

\noindent where $E$ is the exceptional divisor. 

\begin{prop}\label{isolated} Let $X\subset \P^5$ be a cubic fourfold with isolated singularities. Assume that  for some singular point $p$ of $X$ the associated  surface $S_p$ belongs to the family $\sG$. Then the motive $M(X)\in{\bf DM}_{\Q}(\mathbf{C})$ is Schur-finite and lies in the triangulated tensor category ${\bf DM}^{Ab}_{\Q}(\mathbf{C})$ generated by the motives of curves.\end{prop}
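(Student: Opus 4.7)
The strategy is to build $M(X)$ in ${\bf DM}_{\Q}(\mathbf{C})$ from motives that are already known to be Schur-finite and to lie in ${\bf DM}^{Ab}_{\Q}(\mathbf{C})$, exploiting the birational diagram $X \stackrel{q_1}{\longleftarrow} \tilde X_p \stackrel{q_2}{\longrightarrow} \P^4$ recalled before the statement. Both properties are stable under shifts, direct summands and distinguished triangles. The key input is Theorem \ref{15 nodes}: the minimal resolution $\tilde S_p \to S_p$ is a smooth K3 with $h(\tilde S_p) \in \sM^{Ab}_{rat}(\mathbf{C})$, so $M(\tilde S_p) \in {\bf DM}^{Ab}_{\Q}(\mathbf{C})$ via the embedding (\ref{functor}).

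First I would apply the blow-up triangle (\ref{triangle}) to $q_1 : \tilde X_p \to X$, whose center is $\{p\}$ and whose exceptional divisor is the quadric $Q_p$, obtaining
$$M(Q_p) \longrightarrow \un \oplus M(\tilde X_p) \longrightarrow M(X) \longrightarrow M(Q_p)[1].$$
The (possibly singular) quadric $Q_p \subset \P^4$ has motive in the Tate subcategory of ${\bf DM}_{\Q}$: an abstract blow-up of its isolated simple singularities reduces $M(Q_p)$ to motives of smooth quadrics and points, all of which are Tate. It therefore suffices to prove the two properties for $M(\tilde X_p)$.

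For $M(\tilde X_p)$ I would construct a smooth birational model of $X$ starting from $\P^4$. Let $\Sigma \subset \P^4$ be the set of the $15$ nodes of $S_p$, and set $Y_1 := \Bl_\Sigma \P^4$, so that the strict transform of $S_p$ in $Y_1$ is the smooth K3 surface $\tilde S_p$; then put $W := \Bl_{\tilde S_p} Y_1$. Two applications of the classical blow-up formula for smooth varieties (transported to ${\bf DM}_{\Q}$ via $F$) present $M(W)$ as a direct sum of $M(\P^4)$, Tate twists of $M(\tilde S_p)$, and a finite number of Tate summands coming from the exceptional divisors over $\Sigma$. By Theorem \ref{15 nodes} each summand lies in ${\bf DM}^{Ab}_{\Q}(\mathbf{C})$ and is Schur-finite, so the same is true of $M(W)$.

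The main technical obstacle, on which the whole argument turns, is to pass from $M(W)$ back to $M(\tilde X_p)$ and hence to $M(X)$. Following the construction recalled before the statement, the linear system of cubics vanishing on $S_p$ resolves on $W$ to a morphism $W \to X$ which contracts the strict transform of $Q_p$ to $p$ and contracts controlled exceptional divisors over each of the remaining singular points of $X$, the latter corresponding by Stegmann's analysis to nodes of $S_p$ off $\Sing(Q_p)$. Applying the Mayer-Vietoris triangle (\ref{triangle}) of Voevodsky to the proper birational map $W \to X$ with $Z = \Sing(X)$, one obtains
$$M(E) \longrightarrow M(\Sing(X)) \oplus M(W) \longrightarrow M(X) \longrightarrow M(E)[1],$$
where $E = W \times_X \Sing(X)$ is the exceptional locus. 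Each component of $E$ is a (possibly mildly singular) quadric, handled as in the first step, and $M(\Sing(X))$ is a finite sum of copies of $\un$. Combining this with the triangle of Step 1 gives the desired conclusion. The delicate point, and the reason this remains a plan, is the precise book-keeping of the birational geometry so that Voevodsky's blow-up/abstract blow-up axioms apply uniformly at every ADE singularity of $X$ and of $\tilde X_p$ at once.
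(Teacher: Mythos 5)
Your overall architecture (two blow-up triangles linking $\P^4$, $\tilde X_p$ and $X$; finite dimensionality of the resolved K3 from Theorem \ref{15 nodes}; the two-out-of-three property for Schur-finiteness) matches the paper's, but the middle step is where your argument genuinely breaks off, as you yourself admit. The paper does not introduce the auxiliary smooth model $W=\Bl_{\tilde S_p}(\Bl_\Sigma\P^4)$: it applies the triangle (\ref{triangle}) directly to $q_2:\tilde X_p=\Bl_{S_p}\P^4\to\P^4$ with the \emph{singular} center $S_p$. This is legitimate because the cited triangle holds for arbitrary blow-up squares in ${\bf DM}_{\Q}(\mathbf{C})$; because $S_p$ is a $(2,3)$-complete intersection, hence lci, so $E_{S_p}$ is an honest $\P^1$-bundle over $S_p$ and the projective bundle theorem applies; and because $M(S_p)$ is Schur-finite and Abelian since, by Lemma \ref{symplectic} and Theorem \ref{15 nodes}, $h(S_p)$ agrees with the motive of a smooth K3 surface with finite-dimensional Abelian motive. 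Your detour through $W$ is designed to avoid blowing up a singular center, but it creates exactly the problems you flag: you do not establish that the cubics through $S_p$ resolve to a morphism $W\to X$ (equivalently, that $I_{S_p}\cdot\O_W$ is invertible, so that $W$ dominates $\tilde X_p$); you invoke the triangle (\ref{triangle}) for a map $W\to X$ that is not a blow-up along $\Sing(X)$, so you would need the more general abstract blow-up (cdh-descent) triangle rather than the one cited; and the exceptional locus $E=W\times_X\Sing(X)$ is not a union of quadrics --- over $p$ it contains the strict transform of $Q_p$ together with whatever divisors sit over the nodes of $S_p$ lying on $\Sing Q_p$, whose structure you have not determined. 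None of this is unfixable over $\mathbf{C}$, but as written the passage from $M(W)$ back to $M(X)$ is a genuine gap, and it is precisely the step that the paper's direct use of the singular-center blow-up renders unnecessary.

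Two smaller points. First, your opening assertion that Schur-finiteness is ``stable under distinguished triangles'' should be phrased, as the paper does via Mazza, as the two-out-of-three property (if two vertices of a distinguished triangle are Schur-finite, so is the third); the analogous closure property for Kimura finite dimensionality is not known in the triangulated setting, which is the whole reason the conclusion is stated in terms of Schur-finiteness. Second, your treatment of $M(Q_p)$ for a singular quadric threefold via an abstract blow-up of its vertex is fine but more than is needed: for the purposes of the two-out-of-three argument it suffices, as in the paper, that $M(E_p)\cong M(Q_p)$ is Schur-finite and of Abelian type.
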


\begin{proof}  Since $ \tilde X_p $ is the blow-up of $\P^4$ along $S_p$ there is a diagram 
 
 $$\CD E_{S_p}@>>>\tilde X_p=\Bl_{S_p }\\
@VVV    @VVV \\
S_p@>>>\P^4 \endCD $$

\noindent where $E_{S_p}$ is the exceptional  divisor of  the blow-up $q_2: \tilde X_p \to \P^4$. Therefore from (\ref{triangle}) we get a distinguished triangle in ${\bf DM}_{\Q}(\mathbf{C})$

\begin{equation} \label{S_p}\CD  M(E_{S_p})@>>> M(S_p) \oplus M(\tilde X_p)@>>>M(\P^4)@>>>M(E_{S_p})[1]. \endCD\end{equation}

\noindent The exceptional divisor  $E_{S_p}$ is a $\P^1$-bundle over $S_p$. By \ref{symplectic} and \ref{15 nodes}  the motive $h(S_p)$ equals the motive of a smooth K3 surface and is finite-dimensional. Therefore its image in ${\bf DM}_{\Q}(\mathbf{C})$ under the functor (\ref{functor}) is  finite dimensional. By the projective bundle theorem, that is valid also  in $ {\bf DM}_{\Q}(\mathbf{C})$, see \cite[(4.1.11)]{Voev} , we get 

$$M(E_{S_p}) \simeq M(S_p)(1)[2]$$

\noindent Therefore the motives   $M(E_{S_p})$, $M(S_p)$ and $M(\P^4)$ are finite dimensional  in ${\bf DM}_{\Q}(\mathbf{C})$. Finite dimensional objects in the triangulated category  ${\bf DM}_{\Q}(\mathbf{C})$ are also Schur-finite  and Schur-finiteness has the two out of three property in ${\bf DM}_{\Q}(\mathbf{C})$, see \cite[Proposition 5.3]{Maz}. Therefore from (\ref {S_p}) we get that  the motive $M(\tilde X_p)$ is Schur-finite.\par
\noindent The blow-up $q_1 :\tilde X_p \to X$ induces a distinguished triangle in  ${\bf DM}_{\Q}(\mathbf{C})$ as in (\ref{triangle}) 

$$\CD  M(E_p)@>>> M(\{p\}) \oplus M(\tilde X_p)@>>>M(X)@>>>M(E_{p})[1] \endCD.$$

\noindent The exceptional divisor $E_p \subset \tilde X_p$ is isomorphic to the quadric $Q_p \subset \P^4$. Therefore  $M(E_p) $ is Schur-finite in ${\bf DM}_{\Q}(\mathbf{C})$.
The motives $M(E_p)$, $M(\{p\})$ and $M(\tilde X_p)$ are Schur-finite.  By the the two out of three property we get that the motive $M(X)$ is Schur-finite. Moreover, since the motives of $E_p$, $\{p\}$ and $\tilde X_p$ are of Abelian type in $\sM_{rat}(\mathbf{C})$, the motive $M(X)$ belongs to the triangulated tensor subcategory ${\bf DM}^{ab}_{\Q}(\mathbf{C})$ of ${\bf DM}_{\Q}(\mathbf{C})$ generated by the motives of curves, see \cite[Proposition 1.5.6.]{Ay}.

 \end{proof}  

 \subsection{A family of cubic fourfolds with associated 15-nodal, degree six K3 surface.}\label{famnodal} 
 
 The following construction gives an example of a dimension 4 family of singular cubic fourfolds, obtained from K3 surfaces belonging to the family $\sG$ of $15$-nodal K3 surfaces. Therefore, by Proposition \ref{isolated} the motive of all these fourfolds in ${\bf DM}_{\Q}(\mathbf{C})$ is Schur-finite.\par

The mere existence of a family of dimension 4 of $15$-nodal K3 surfaces of degree 6 is assured by \cite[Theorem. 8.3]{GS}, but for the lack of a precise reference of our knowledge, we give here a geometric construction of such surfaces. 
 
\begin{prop}
A sextic, 15-nodal complete intersection surface in $\P^4$ is a double cover of $\P^2$ ramified along a degenerate sextic $F$ given by two conics and two lines. Such surfaces form a 4-dimensional family. The sextic $F$ has two everywhere tangent conics.
\end{prop}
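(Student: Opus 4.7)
The plan is to exhibit the double cover structure explicitly via projection from a line through two of the fifteen nodes, and then decompose the branch using the lattice structure inherited from the $(\Z_2)^4$-action on the underlying K3.

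For $S = Q \cap C \subset \P^4$ a $15$-nodal complete intersection, I would pick two nodes $p_1, p_2$ and set $\ell = \overline{p_1 p_2}$. Since $p_1, p_2 \in Q$ and $\deg Q = 2$, we have $\ell \cap Q = \{p_1, p_2\}$, while $\ell \cap C = \{p_1, p_2, p_3\}$; hence $\ell \cap S = \{p_1, p_2\}$, and each node contributes intersection multiplicity $2$, giving $\ell \cdot S = 4$. The projection $\pi_\ell : S \dashrightarrow \P^2$ has generic fiber $(\Pi_a \cap S) \setminus \ell$ of length $\deg S - \ell \cdot S = 6 - 4 = 2$, hence is $2{:}1$. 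On the resolution $\tilde S$ the map extends to a morphism defined by $|L|$ with $L = H - E_1 - E_2$ (where $H$ is the hyperplane class and $E_i$ the exceptional curves over $p_i$); one checks $L^2 = 2$ and $h^0(L) = 3$, so $\varphi_L : \tilde S \to \P^2$ is a degree-$2$ morphism. Since $K_{\tilde S} = 0$, the ramification class is $R \sim 3L$, forcing the branch $F \subset \P^2$ to be a sextic.

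To identify $F$, I would use coordinates with $\ell = \{x_2 = x_3 = x_4 = 0\}$ and parametrize the $2$-planes $\Pi_s \supset \ell$ by $(x_0 : x_1 : t) \mapsto (x_0 : x_1 : ts_2 : ts_3 : ts_4)$. Writing $q_{20} = Q|_{t=0}$ and factoring $c_{30} = q_{20}(\xi - \alpha_3)$ with $\xi = x_0/x_1$, one can use $Q|_{\Pi_s} = 0$ to express $q_{20}$ in terms of $(x, t; s)$ and substitute into $C|_{\Pi_s}$, yielding an equation divisible by $t$ (reflecting the nodes); the residual factor is a binary quadratic in $(\xi, t)$ whose discriminant cuts out $F$ in $\P^2_s$. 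Combining with the description $S = X/G$ from \cite[\S 8.2]{GS}, the covering involution of $\varphi_L$ lifts to an order-$2$ automorphism of $\tilde S$ whose action on the rank-$16$ Picard lattice forces $F$ to split as two conics plus two lines, the conics arising from $\sigma$-invariant rulings of $Q$ near $\ell$ and the lines from the residual cubic factor.

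To verify the tangency, I would count singularities: for $F = C_1 + C_2 + L_1 + L_2$ with smooth $C_i$ and $L_j$, generic transverse intersections give $1 + 8 + 4 = 13$ nodes of $F$, falling short of the $15$ $(-2)$-classes present on $\tilde S$; requiring $C_1, C_2$ to be bitangent replaces the four transverse $C_1 \cap C_2$ points by two tangent points, each an $A_3$-singularity. This gives $9$ $A_1$'s plus $2$ $A_3$'s, total Milnor number $9 + 6 = 15$, matching the fifteen $(-2)$-classes. For the dimension count: two conics plus two lines in $\P^2$ are $5 + 5 + 2 + 2 = 14$ parameters; modding out by $PGL_3$ ($8$ parameters) and imposing the codimension-$2$ bitangency yields $14 - 8 - 2 = 4$, matching the dimension of $\sG$ from \cite[Thm.\ 8.3]{GS}.

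The main obstacle is showing that the branch $F$ really splits as two bitangent conics plus two lines rather than being an irreducible sextic of the same Milnor number; this step relies crucially on the lattice-theoretic structure of $\mathrm{Pic}(\tilde S)$ inherited from the $(\Z_2)^4$-symplectic action on $X$, so the delicate point will be tracing how the covering involution $\sigma$ acts on the $(-2)$-classes and forces the desired factorization.
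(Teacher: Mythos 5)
Your first paragraph (projection from $\ell=\overline{p_1p_2}$, the count $\ell\cdot S=4$ giving a $2{:}1$ map, $L=H-E_1-E_2$ with $L^2=2$, $h^0(L)=3$, branch class $3L$) matches the paper's setup and is in fact more carefully justified. The problem is in the second half, and it stems from a misreading of the statement. The ``two everywhere tangent conics'' are \emph{not} the two conic components of $F$ made mutually bitangent: they are the images of the exceptional curves $E_1,E_2$ over the two projection centers, which have degree $(H-E_1-E_2)\cdot E_i=2$ and therefore map to conics $C_1,C_2\subset\P^2$ with $C_i\cap F=2\Delta_i$, $\deg\Delta_i=6$ --- contact conics everywhere tangent to the branch sextic, exactly as for the tropes of a Kummer surface. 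Consequently your singularity count for $F$ is wrong: only the $13$ nodes $q_3,\dots,q_{15}$ of $S$ \emph{off} the center of projection give singularities of the double cover, hence nodes of $F$; the two nodes you project from are accounted for by the contact conics $C_1,C_2$ and contribute nothing to $\Sing F$. So $F$ has exactly $13$ ordinary nodes (total Milnor number $13$, not $15$), which is precisely the number $4+4\cdot2+1=13$ of pairwise intersections of two conics and two lines in general position. Imposing a bitangency between the conic components to create two $A_3$ points would give the double cover singularities worse than nodes, contradicting the hypothesis that $S$ is $15$-nodal.

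This misreading also puts the difficulty in the wrong place. You flag the reducibility of $F$ as the main obstacle, to be settled by tracing the covering involution on the rank-$16$ lattice; but no such analysis is needed. An irreducible plane sextic has arithmetic genus $10$ and hence at most $10$ nodes, while $F$ has $13$, so $F$ is forced to be reducible --- this is the paper's one-line argument --- and the splitting into two conics and two lines is then matched against the node count. Your parameter count $14-8-2=4$ lands on the right number, but the two conditions you subtract (mutual bitangency of the conic components) are not the intended ones: the paper's two conditions are the tangency conditions coming from the existence of the contact conics $C_1,C_2$. As it stands, the proposal proves a statement about a different (and, for a $15$-nodal $S$, impossible) branch configuration.
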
 
 
\begin{proof}
Let us consider such a K3 surface $S\subset \P^4$ and project it onto $\P^2$ from a line joining two nodes. Call them $q_1$ and $q_2$. Since both the nodes have multiplicity 2 this exhibits $S$ as a double cover of the plane, ramified along a sextic curve $F$.
 
For the generic desingularized K3 surface $\tilde S$ in our family, we have that $NS(S)\otimes \Q$ is generated by the classes $H, E_1, \dots, E_{15}$, where $H^2=6$ is the natural polarization and the $E_i$ are the exceptional divisors with $E_i^2=-2$. Hence the polarization on $\tilde S$ that gives the map to $\P^2$ is $|H- E_1 - E_2|,$ and the family has dimension 4.
 
Since the projective model in $\P^4$ has $15$ nodes, the ramification sextic cannot be smooth. In fact, $F$ has 13 nodes, that are the images of the nodes $q_3, \dots, q_{15}$, not contained inside the line center of projection. Hence it is reducible, and it splits as two conics and two lines. The images of the two exceptional divisors $E_1$ and $E_2$, exactly as in the well-known case of the double cover of $\P^2$ defined by a Kummer surface, are sent to two conics $C_1$ and $C_2$ in $\P^2$, which are everywhere tangent to F. That is, $C_i \cap F= 2\Delta_i $, where the $\Delta_i$ are degree 6 divisors. The strict transform in $\tilde{S}$ of each $C_i$ is the union of two $(-2)$curves that intersect in 6 points. One of these two curves corresponds to (one of the) node(s) from which we project.
\end{proof}

\begin{rem}
We also observe that a naive parameter count gives $5+5+2+2-8=6$ for the two lines and two conics up to projectivity. The remaining two conditions to descend to the dimension of the family are given by the tangency conditions.
\end{rem}

Now, the construction of the corresponding family of cubic fourfolds is the same as in the one nodal case (see for example \cite[Lemma 5.1]{Kuznetsov_2009}). Take any K3 surface $S\subset \P^4$ in the family $\mathcal{G}$ and consider the map

\begin{equation}\label{cubicmap}
\psi:\P^4 \dashrightarrow \P^5,
\end{equation}

given by the full linear system of cubics through $S$. This induces a birational transformation that consists first in blowing-up $S$, and then contract to one point all the trisecant lines to $S$. The union of these trisecant lines is exactly the strict transform of the  only quadric through $S$, which is then contracted on a (singular) point of $X$, giving the birational transformation already described in Sect. \ref{schurbirat}. By this construction and Proposition  \ref{isolated}, we obtain the following.

\begin{thm}
Let $S\subset \P^4$ be any K3 surface from the family $\mathcal{G}$, and let $X$ be a singular cubic fourfold obtained from $S$ via the map $\psi$ of Eq. \ref{cubicmap}. Then $X$ has Schur finite motive in ${\bf DM}_{\Q}(\mathbf{C})$ and it belongs to the subcategory ${\bf DM}^{Ab}_{\Q}$.  
\end{thm}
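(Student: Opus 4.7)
The plan is to recognize the statement as essentially an application of Proposition~\ref{isolated} to the cubic fourfolds produced by the map $\psi$ of Eq.~\ref{cubicmap}. The whole content is to verify that the hypotheses of that proposition are satisfied, namely that $X$ has only isolated singularities and that one of its singular points $p$ has associated sextic surface $S_p$ lying in the family $\sG$.

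First I would identify the relevant singular point of $X$. Since $S\subset \P^4$ is cut out by a quadric $Q$ and a cubic $C$, the linear system of cubics through $S$ has $Q\cdot(\text{linear forms})$ as a base-point-free sub-system on $\P^4\smallsetminus Q$. By the standard analysis (see e.g.\ \cite[Lemma~5.1]{Kuznetsov_2009}) recalled in Sect.~\ref{famnodal}, the map $\psi$ factors as the blow-up of $\P^4$ along $S$ followed by the contraction of the strict transform of $Q$ to a point $p\in X$. This point is precisely the image of the pencil of trisecant lines of $S$, and it is a singular point of $X$ whose type is determined by the rank of $Q$.

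Second I would argue that the construction $\psi$ is the exact inverse of the projection construction from the singular point $p$ recalled at the beginning of Sect.~\ref{schurbirat}. Projecting $X$ from $p$ contracts the exceptional divisor $E_p$ over $p$ (isomorphic to $Q_p$) and blows down the lines through $p$ on $X$; the resulting family of lines is parametrised by a $(2,3)$-complete intersection $S_p\subset\P^4$. Since the birational map $X\dashrightarrow\P^4$ given by projection from $p$ is inverse to $\psi$, the surface $S_p$ coincides with our original $S$, and the quadric containing $S_p$ coincides with $Q$. In particular $S_p\in\sG$.

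Third I would verify that $X$ has only isolated singularities. By the correspondence of Stegmann recalled in Sect.~\ref{schurbirat}, the singular points of $X$ away from $p$ are in bijection with the singularities of $S_p=S$ that are not contained in $\mathrm{Sing}\,Q$, while the singularities of $X$ that sit on the exceptional divisor $E_p\cong Q$ correspond to nodes of $S$ that lie on $\mathrm{Sing}\,Q$. In either case the singularities are isolated since $S$ has only the $15$ nodes, and $\mathrm{Sing}\,Q$ is either empty or a linear subspace meeting $S$ in finitely many of those nodes. Hence $X$ is a cubic fourfold with isolated singularities and with at least one singular point $p$ whose associated sextic surface belongs to $\sG$.

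Once these verifications are in place, the conclusion follows immediately by applying Proposition~\ref{isolated}: $M(X)$ is Schur-finite in $\mathbf{DM}_{\Q}(\mathbf{C})$ and belongs to the triangulated tensor subcategory $\mathbf{DM}^{Ab}_{\Q}(\mathbf{C})$ generated by motives of curves. The only genuinely delicate step is the identification $S_p=S$ together with the control on where the singular points of $X$ sit; everything else is formal once $\psi$ is recognised as the inverse of the projection from~$p$.
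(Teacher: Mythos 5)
Your proposal is correct and follows essentially the same route as the paper: the theorem is obtained by recognising $\psi$ as the inverse of the projection-from-a-node construction of Sect.~\ref{schurbirat} (so that $S_p=S\in\sG$) and then applying Proposition~\ref{isolated}. The paper states this more tersely, while you make explicit the verification that the singularities of $X$ are isolated and correctly located; this is a useful but not essentially different elaboration.
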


\bibliography{bib_tocho}

\begin{thebibliography}{10}

\bibitem{add2conj}
N.~Addington.
\newblock On two rationality conjectures for cubic fourfolds.
\newblock {\em Math. Res. Lett.}, 23(1):1--13, 2016.

\bibitem{dp6}
N.~Addington, B.~Hassett, Y.~Tschinkel, and A.~V\'{a}rilly-Alvarado.
\newblock Cubic fourfolds fibered in sextic del {P}ezzo surfaces.
\newblock {\em Amer. J. Math.}, 141(6):1479--1500, 2019.

\bibitem{AT}
N.~Addington and R.~Thomas.
\newblock Hodge theory and derived categories of cubic fourfolds.
\newblock {\em Duke Math. J.}, 163(10):1885--1927, 2014.

\bibitem{An}
Y.~Andr\'{e}.
\newblock Pour une th\'{e}orie inconditionnelle des motifs.
\newblock {\em Inst. Hautes \'{E}tudes Sci. Publ. Math.}, (83):5--49, 1996.

\bibitem{AK}
Y.~Andr\'{e} and B.~Kahn.
\newblock Nilpotence, radicaux et structures mono\"{\i}dales.
\newblock {\em Rend. Sem. Mat. Univ. Padova}, 108:107--291, 2002.
\newblock With an appendix by Peter O'Sullivan.

\bibitem{ABB}
A.~Auel, M.~Bernardara, and M.~Bolognesi.
\newblock Fibrations in complete intersections of quadrics, {C}lifford
  algebras, derived categories, and rationality problems.
\newblock {\em J. Math. Pures Appl. (9)}, 102(1):249--291, 2014.

\bibitem{AB20}
H.~Awada and M.~Bolognesi.
\newblock Unirationality of certain universal families of cubic fourfolds.
\newblock In {\em Rationality of varieties}, volume 342 of {\em Progr. Math.},
  pages 97--112. Birkh\"auser/Springer, Cham, [2021] \copyright2021.

\bibitem{Ay}
J.~Ayoub.
\newblock The motivic vanishing cycles and the conservation conjecture.
\newblock In {\em Algebraic cycles and motives. {V}ol. 1}, volume 343 of {\em
  London Math. Soc. Lecture Note Ser.}, pages 3--54. Cambridge Univ. Press,
  Cambridge, 2007.

\bibitem{ccs}
W.~P. Barth, K.~Hulek, C.~A.~M. Peters, and A.~Van~de Ven.
\newblock {\em Compact complex surfaces}, volume~4 of {\em Ergebnisse der
  Mathematik und ihrer Grenzgebiete. 3. Folge. A Series of Modern Surveys in
  Mathematics [Results in Mathematics and Related Areas. 3rd Series. A Series
  of Modern Surveys in Mathematics]}.
\newblock Springer-Verlag, Berlin, second edition, 2004.

\bibitem{BLMS}
A.~Bayer, M.~Lahoz, E.~Macr\`\i, and P.~Stellari.
\newblock Stability conditions on {K}uznetsov components.
\newblock {\em Ann. Sci. \'{E}c. Norm. Sup\'{e}r. (4)}, 56(2):517--570, 2023.
\newblock With an appendix by Bayer, Lahoz, Macr\`\i , Stellari and X. Zhao.

\bibitem{Beau}
A.~Beauville and R.~Donagi.
\newblock La vari\'{e}t\'{e} des droites d'une hypersurface cubique de
  dimension {$4$}.
\newblock {\em C. R. Acad. Sci. Paris S\'{e}r. I Math.}, 301(14):703--706,
  1985.

\bibitem{BBrepre}
M.~Bernardara and M.~Bolognesi.
\newblock Categorical representability and intermediate {J}acobians of {F}ano
  threefolds.
\newblock In {\em Derived categories in algebraic geometry}, EMS Ser. Congr.
  Rep., pages 1--25. Eur. Math. Soc., Z\"{u}rich, 2012.

\bibitem{BP20}
M.~Bolognesi and C.~Pedrini.
\newblock The transcendental motive of a cubic fourfold.
\newblock {\em J. Pure Appl. Algebra}, 224(8), 2020.

\bibitem{MR3968870}
M.~Bolognesi, F.~Russo, and G.~Staglian\`o.
\newblock Some loci of rational cubic fourfolds.
\newblock {\em Math. Ann.}, 373(1-2):165--190, 2019.

\bibitem{BFMQ}
C.~Brooke, S.~Frei, L.~Marquand, and X.~Qin.
\newblock Birational geometry of fano varieties of lines on cubic fourfolds
  containing pairs of cubic scrolls.
\newblock 2024.

\bibitem{Bu}
T.-H. B\"{u}lles.
\newblock Motives of moduli spaces on {K}3 surfaces and of special cubic
  fourfolds.
\newblock {\em Manuscripta Math.}, 161(1-2):109--124, 2020.

\bibitem{calda}
A.~C\u{a}ld\u{a}raru.
\newblock Nonfine moduli spaces of sheaves on {$K3$} surfaces.
\newblock {\em Int. Math. Res. Not.}, (20):1027--1056, 2002.

\bibitem{Diaz}
H.~Diaz.
\newblock The motive of the {F}ano surface of lines.
\newblock {\em C. R. Math. Acad. Sci. Paris}, 354(9):925--930, 2016.

\bibitem{FFZ}
S.~Floccari, L.~Fu, and Z.~Zhang.
\newblock On the motive of o'grady's ten-dimensional hyper-kähler varieties.
\newblock {\em to appear on Comm. Cont. Math.}, 2020.

\bibitem{FLV}
L.~Fu, R.~Laterveer, and C.~Vial.
\newblock The generalized {Franchetta} conjecture for some {hyper-K\"ahler}
  varieties, {II}.
\newblock {\em Journal de l{\textquoteright}\'Ecole polytechnique {\textemdash}
  Math\'ematiques}, 8:1065--1097, 2021.

\bibitem{Fu}
W.~Fulton.
\newblock {\em Intersection theory}, volume~2 of {\em Ergebnisse der Mathematik
  und ihrer Grenzgebiete (3) [Results in Mathematics and Related Areas (3)]}.
\newblock Springer-Verlag, Berlin, 1984.

\bibitem{GS}
A.~Garbagnati and A.~Sarti.
\newblock Kummer surfaces and {K}3 surfaces with {$(\Bbb{Z}/2\Bbb{Z})^4$}
  symplectic action.
\newblock {\em Rocky Mountain J. Math.}, 46(4):1141--1205, 2016.

\bibitem{GG}
S.~Gorchinskiy and V.~Guletski\u{\i}.
\newblock Motives and representability of algebraic cycles on threefolds over a
  field.
\newblock {\em J. Algebraic Geom.}, 21(2):347--373, 2012.

\bibitem{Ha2}
B.~Hassett.
\newblock Some rational cubic fourfolds.
\newblock {\em J. Algebraic Geom.}, 8(1):103--114, 1999.

\bibitem{Ha1}
B.~Hassett.
\newblock Special cubic fourfolds.
\newblock {\em Compositio Mathematica}, 120:1--23, 01 2000.

\bibitem{Ha3}
B.~Hassett.
\newblock Cubic fourfolds, {K}3 surfaces, and rationality questions.
\newblock In {\em Rationality problems in algebraic geometry}, volume 2172 of
  {\em Lecture Notes in Math.}, pages 29--66. Springer, Cham, 2016.

\bibitem{Huy2}
D.~Huybrechts.
\newblock Symplectic automorphisms of {K}3 surfaces of arbitrary finite order.
\newblock {\em Math. Res. Lett.}, 19(4):947--951, 2012.

\bibitem{ILP}
C.~Ingalls, A.~Logan, and O.~Patashnick.
\newblock Explicit coverings of families of elliptic surfaces by squares of
  curves.
\newblock {\em Math. Z.}, 302(2):1191--1238, 2022.

\bibitem{KMP}
B.~Kahn, J.~P. Murre, and C.~Pedrini.
\newblock On the transcendental part of the motive of a surface.
\newblock In {\em Algebraic cycles and motives. {V}ol. 2}, volume 344 of {\em
  London Math. Soc. Lecture Note Ser.}, pages 143--202. Cambridge Univ. Press,
  Cambridge, 2007.

\bibitem{kimurafinite}
S.-I. Kimura.
\newblock Chow groups are finite dimensional, in some sense.
\newblock {\em Math. Ann.}, 331(1):173--201, 2005.

\bibitem{kuzquad}
A.~Kuznetsov.
\newblock Derived categories of quadric fibrations and intersections of
  quadrics.
\newblock {\em Adv. Math.}, 218(5):1340--1369, 2008.

\bibitem{Kuznetsov_2009}
A.~Kuznetsov.
\newblock Derived categories of cubic fourfolds.
\newblock {\em Cohomological and Geometric Approaches to Rationality Problems},
  page 219–243, Oct 2009.

\bibitem{kuz_levico}
A.~Kuznetsov.
\newblock Derived categories view on rationality problems.
\newblock In {\em Rationality problems in algebraic geometry}, volume 2172 of
  {\em Lecture Notes in Math.}, pages 67--104. Springer, Cham, 2016.

\bibitem{Lat3}
R.~Laterveer.
\newblock A family of {$K3$} surfaces having finite-dimensional motive.
\newblock {\em Arch. Math. (Basel)}, 106(6):515--524, 2016.

\bibitem{Lat2}
R.~Laterveer.
\newblock A remark on the motive of the {F}ano variety of lines of a cubic.
\newblock {\em Ann. Math. Qu\'{e}.}, 41(1):141--154, 2017.

\bibitem{Lat1}
R.~Laterveer.
\newblock A family of cubic fourfolds with finite-dimensional motive.
\newblock {\em J. Math. Soc. Japan}, 70(4):1453--1473, 2018.

\bibitem{LSV}
R.~Laza, G.~Sacc\`a, and C.~Voisin.
\newblock A hyper-{K}\"{a}hler compactification of the intermediate {J}acobian
  fibration associated with a cubic 4-fold.
\newblock {\em Acta Math.}, 218(1):55--135, 2017.

\bibitem{LLSvS}
C.~Lehn, M.~Lehn, C.~Sorger, and D.~van Straten.
\newblock Twisted cubics on cubic fourfolds.
\newblock {\em J. Reine Angew. Math.}, 731:87--128, 2017.

\bibitem{pertusietc}
C.~Li, L.~Pertusi, and X.~Zhao.
\newblock Elliptic quintics on cubic fourfolds, {O}'{G}rady 10, and
  {L}agrangian fibrations.
\newblock {\em Adv. Math.}, 408:Paper No. 108584, 56, 2022.

\bibitem{LPZ}
C.~Li, L.~Pertusi, and X.~Zhao.
\newblock Twisted cubics on cubic fourfolds and stability conditions.
\newblock {\em Algebr. Geom.}, 10(5):620--642, 2023.

\bibitem{alex}
A.~Massarenti.
\newblock On the unirationality of quadric bundles.
\newblock {\em Adv. Math.}, 431:Paper No. 109235, 24, 2023.

\bibitem{Maz}
C.~Mazza.
\newblock Schur functors and motives.
\newblock {\em $K$-Theory}, 33(2):89--106, 2004.

\bibitem{Morrison84}
D.~R. Morrison.
\newblock On {$K3$} surfaces with large {P}icard number.
\newblock {\em Invent. Math.}, 75(1):105--121, 1984.

\bibitem{Para}
K.~Paranjape.
\newblock Abelian varieties associated to certain {$K3$} surfaces.
\newblock {\em Compositio Math.}, 68(1):11--22, 1988.

\bibitem{Ped}
C.~Pedrini.
\newblock On the finite dimensionality of a {K}3 surface.
\newblock {\em Manuscripta Math.}, 138(1-2):59--72, 2012.

\bibitem{prieto}
Y.~Prieto.
\newblock On hyperkähler manifolds of $k3^{[n]}$-type with large picard
  number.
\newblock 2024.

\bibitem{riess}
U.~Rie\ss.
\newblock On the {C}how ring of birational irreducible symplectic varieties.
\newblock {\em Manuscripta Math.}, 145(3-4):473--501, 2014.

\bibitem{MR3934590}
F.~Russo and G.~Staglian\`o.
\newblock Congruences of 5-secant conics and the rationality of some admissible
  cubic fourfolds.
\newblock {\em Duke Math. J.}, 168(5):849--865, 2019.

\bibitem{russo2019trisecant}
F.~Russo and G.~Staglian\`o.
\newblock Trisecant flops, their associated {K}3 surfaces and the rationality
  of some cubic fourfolds.
\newblock {\em J. Eur. Math. Soc. (JEMS)}, 25(6):2435--2482, 2023.

\bibitem{sacca}
G.~Sacc\`a.
\newblock Birational geometry of the intermediate {J}acobian fibration of a
  cubic fourfold.
\newblock {\em Geom. Topol.}, 27(4):1479--1538, 2023.
\newblock With an appendix by Claire Voisin.

\bibitem{Sc}
U.~Schlickewei.
\newblock On the {A}ndr\'{e} motive of certain irreducible symplectic
  varieties.
\newblock {\em Geom. Dedicata}, 156:141--149, 2012.

\bibitem{So}
A.~Soldatenkov.
\newblock Deformation principle and {A}ndr\'e{} motives of projective
  hyperk\"ahler manifolds.
\newblock {\em Int. Math. Res. Not. IMRN}, (21):16814--16843, 2022.

\bibitem{Steg}
A.-K. Stegmann.
\newblock {\em Cubic fourfolds with ADE singularities and K3 surfaces}.
\newblock PhD thesis, Universit\"at Hannover, 2020.

\bibitem{Varch}
A.~N. Varchenko.
\newblock Asymptotic behaviors of integrals, and {H}odge structures.
\newblock In {\em Current problems in mathematics, {V}ol. 22}, Itogi Nauki i
  Tekhniki, pages 130--166. Akad. Nauk SSSR, Vsesoyuz. Inst. Nauchn. i Tekhn.
  Inform., Moscow, 1983.

\bibitem{vialfib}
C.~Vial.
\newblock Algebraic cycles and fibrations.
\newblock {\em Doc. Math.}, 18:1521--1553, 2013.

\bibitem{Voev}
V.~Voevodsky.
\newblock Triangulated categories of motives over a field.
\newblock In {\em Cycles, transfers, and motivic homology theories}, volume 143
  of {\em Ann. of Math. Stud.}, pages 188--238. Princeton Univ. Press,
  Princeton, NJ, 2000.

\bibitem{voisinaj}
C.~Voisin.
\newblock Abel-{J}acobi map, integral {H}odge classes and decomposition of the
  diagonal.
\newblock {\em J. Algebraic Geom.}, 22(1):141--174, 2013.

\bibitem{twisted}
C.~Voisin.
\newblock Hyper-{K}\"ahler compactification of the intermediate {J}acobian
  fibration of a cubic fourfold: the twisted case.
\newblock In {\em Local and global methods in algebraic geometry}, volume 712
  of {\em Contemp. Math.}, pages 341--355. Amer. Math. Soc., [Providence], RI,
  [2018] \copyright2018.

\end{thebibliography}
\bibliographystyle{abbrv}

\end{document}